\providecommand\@dotsep{5}
\def\listtodoname{List of Todos}
\def\listoftodos{\@starttoc{tdo}\listtodoname}
\definecolor{codeback}{HTML}{F7F7F7}     
\definecolor{codeframe}{HTML}{F6B069}    
\definecolor{commentcolor}{HTML}{5D8AA8} 
\definecolor{keywordcolor}{HTML}{008000} 
\definecolor{stringcolor}{HTML}{B03060}  
\tiny\color{gray},
\numberwithin{equation}{section}
\newtheorem{theorem}{Theorem}[section]
\newtheorem{proposition}[theorem]{Proposition}
\newtheorem{lemma}[theorem]{Lemma}
\newtheorem{remark}[theorem]{Remark}
\begin{document}
	
	\title [Asymptotic behavior of damped second-order gradient systems]{Asymptotic behavior for a class of damped second-order gradient systems via Lyapunov method}
	
	\author{Renan J. S. Isneri}
	\author{Eric B. Santiago$^\star$}
	\author{Severino H. da Silva}
	
	\address[Renan J. S. Isneri]
	{\newline\indent Unidade Acad\^emica de Matem\'atica
		\newline\indent 
		Universidade Federal de Campina Grande,
		\newline\indent
		58429-970, Campina Grande - PB - Brazil}
	\email{\href{renanisneri@mat.ufcg.edu.br}{renanisneri@mat.ufcg.edu.br}}
	
	\address[Eric B. Santiago]
	{\newline\indent Unidade Acad\^emica de Matem\'atica
		\newline\indent 
		Universidade Federal de Campina Grande,
		\newline\indent
		58429-970, Campina Grande - PB - Brazil}
	\email{\href{eric.busatto@gmail.com}{eric.busatto@gmail.com}}
	
	\address[Severino H. da Silva]
	{\newline\indent Unidade Acad\^emica de Matem\'atica
		\newline\indent 
		Universidade Federal de Campina Grande,
		\newline\indent
		58429-970, Campina Grande - PB - Brazil}
	\email{\href{horacio@mat.ufcg.edu.br}{horacio@mat.ufcg.edu.br}}
	
	\pretolerance10000
	
	\begin{abstract}
		\noindent In this work we study the asymptotic behavior of a class of damped second-order gradient systems
		$$
		\ddot{u}(t) + a\dot{u}(t) + \nabla W(u(t)) = 0,
		$$
		under assumptions ensuring local convexity of the potential near equilibrium and coercivity at infinity. By introducing a Lyapunov functional adapted to the geometry of the system, we establish uniform asymptotic stability of the equilibrium for all $a \in (0,a_0]$, together with exponential decay when the potential satisfies a quadratic control near its minimum. Furthermore, complementary arguments based on semigroup theory reveal the existence of a global attractor. We also present numerical simulations for some $W$ potentials that illustrate the behavior of trajectories near equilibrium, in both dissipative and conservative regimes.
	\end{abstract}
	\thanks{$^\star$The second author was supported by grant \# 2025/469, issued in 04/01/2025, Paraíba State Research Support Foundation (FAPESQ-PB). Brazil}
	\subjclass[2020]{Primary: 34D05, 37N05, 93D05; Secondary: 34D45, 37N30.} 
	\keywords{Second-order gradient systems; Damped dynamical systems; Uniform asymptotic stability; Lyapunov functional}
	
	\maketitle
	
	\tableofcontents
	
	\setcounter{tocdepth}{1}
	

\section{Introduction}

Second-order dissipative systems of the form
\begin{equation}\label{Equation}
	\ddot{u}(t) + a \dot{u}(t) + \nabla W(u(t)) = 0,
\end{equation}
arise naturally in mechanics, optimization, and nonlinear dynamics. A classical example is the \emph{heavy-ball method with friction}, in which a particle moves on the graph of a potential $W$ under the action of inertia and a linear damping force with coefficient $a>0$. The heavy-ball method was introduced by Polyak in 1964 to speed up the convergence of iterative schemes \cite{Polyak}, and was formalized by Attouch, Goudou, and Redont \cite{AttouchGoudouRedont}, who analyzed the frictional equation to explore the minima of potential $W$. Beyond heavy-ball method with friction, the prototype equation \eqref{Equation} also models the motion of a \emph{damped simple pendulum}.  
Let $u=\theta$ be the pendulum angle, take $W(\theta)=\frac{g}{\ell}(1-\cos\theta)$, then
$$
\ddot{\theta}(t)+ a\,\dot{\theta}(t)+\dfrac{g}{\ell}\sin\theta(t)=0,
$$
which is the standard damped pendulum equation. This modeling is classical in nonlinear dynamics and the term $a\theta'$ represents the damping force, with $a > 0$. Depending on the initial energy, this system exhibits oscillatory motions, convergence to stable equilibria, and heteroclinic trajectories connecting different critical points of $W$. A comprehensive analysis of the pendulum dynamics can be found in \cite{Mawhin}. In fact, beyond these classical examples, a number of other mechanical and computational systems reduce to the general dissipative structure \eqref{Equation}, which reinforces the broad applicability and theoretical interest of the problem.

Second-order equations with constant damping also appear in the study of generalized gradient systems. Recent works have achieved significant progress in the asymptotic analysis of such systems. Jendoubi and collaborators showed that an equation of the type \eqref{Equation} can be interpreted as a quasi-gradient system via small deformations of the total energy. They proved that the desingularizing function in the Kurdyka-Łojasiewicz inequality satisfies $\varphi(t) \ge c\sqrt{t}$ when the potential $W$ is definable and of class $C^2$ \cite{Jendoubi1}. This result implies that every trajectory of a quasi-gradient system either converges to a critical point or its norm diverges to infinity. Another relevant development concerns Newton-type systems with {\it Hessian-driven damping}, combining the continuous Newton method with the heavy-ball system. Boţ and Csetnek in \cite{BotCsetnek} observed that such systems are second-order in both time and space, due to the presence of the acceleration term and the Hessian, and can be seen as a mixture of Newton’s method with the heavy-ball system. This hybrid approach inherits advantages from both methods, as shown in the optimization context by Alvarez et al. \cite{Alvarez}. We also mention that several recent works have focused on nonhomogeneous variants of \eqref{Equation} and on models where the damping parameter depends on time, such as \cite{1,2,3,4,6,7}.

The term $a\dot{u}$ in \eqref{Equation} represents a damping force, while $\nabla W(u)$ acts as a restoring force derived from the potential energy $W(u)$. In this work we focus on the second-order dissipative systems \eqref{Equation}, where $a > 0$ is a damping coefficient, $u\colon \mathbb{R} \to \mathbb{R}^N$ is the unknown trajectory, and $W\colon \mathbb{R}^N \to \mathbb{R}$ is a potential. Throughout the paper we impose the following assumptions on $W$:
\begin{itemize} 
	\item[\hypertarget{W1}{($W_1$)}] $W\in C^2(\mathbb{R}^N; \mathbb{R})$.
	
	\item[\hypertarget{W2}{($W_2$)}] There exists $\delta > 0$ and $u_* \in \mathbb{R}^{N}$ such that
	$$
	W(u_*) = 0 \quad \text{and} \quad W(u) > 0 \quad \text{for all} \quad u \in B_\delta(u_*) \setminus \{u_*\}.
	$$
	
	\item[\hypertarget{W3}{($W_3$)}] There exists $\lambda > 0$ such that
	$$
	\langle \nabla W(u), u - u_* \rangle > 0 \quad \text{for all} \quad u \in B_\lambda(u_*) \setminus \{u_*\}.
	$$
	
	\item[\hypertarget{W4}{($W_4$)}] $W$ is coercive, that is,
	$$
	\lim_{\|u\| \to +\infty} W(u) = +\infty.
	$$
	
	\item[\hypertarget{W5}{($W_5$)}] There exist constants $\alpha, \beta, \mu > 0$ such that, for every $x \in B_\lambda(u_*)$,
	$$
	\alpha\|x - u_*\|^2 \le W(x) \le \beta\|x - u_*\|^2 \quad \text{and} \quad \langle \nabla W(x), x - u_* \rangle \ge \mu\|x - u_*\|^2.
	$$
\end{itemize}
 
A simple example satisfying \hyperlink{W1}{$(W_1)$}-\hyperlink{W4}{$(W_5)$} with $u_*=0$ is the harmonic potential
$$
W(u)=\frac{1}{2}\|u\|^2.
$$
In this case, \eqref{Equation} reduces to the classical damped harmonic oscillator. This model describes a {\it mass-spring-damper system}, where $a$ represents the viscous damping and $u$ is the Hookean restoring force. Solutions exhibit decaying oscillations and converge to the equilibrium $u=0$. Such systems appear  not only in mechanics but also in electrical circuits and other dissipative models driven by an energy potential, see \cite{Edwards,Sunday1,Sunday2}. Beyond the quadratic case, the class of potentials satisfying assumptions \hyperlink{W1}{$(W_1)$}-\hyperlink{W4}{$(W_4)$} is broad and includes many models from mechanics, nonlinear dynamics, and phase-transition theory. A general and important family is obtained by taking  
$$
W(u)=\Phi(u-u_*),
$$
where $\Phi\colon\mathbb{R}^N\to\mathbb{R}$ is a convex, positive function with a global minimum at the origin. Such potentials arise naturally in the study of quasilinear Allen-Cahn type equations, see, for instance, \cite{Isneri}. In particular, coercive polynomial potentials, such as $W(u)=\|u-u_*\|^p$, with $p\ge 2$, and the exponential potential  
$$
W(u)=\frac12\big(e^{\|u-u_*\|^2}-1\big),
$$
which is convex and coercive, fits perfectly into this model. Another classical example is the Ginzburg-Landau potential  
$$
W(u)=\frac14\left(\|u\|^2-1\right)^2,
$$
whose set of global minima is the unit sphere $\{u\in\mathbb{R}^N : \|u\| = 1\}$. This potential play a central role in the theory of phase transitions and superconductivity, see \cite{Allen,5}. A further relevant example, frequently appearing in the study of dissipative hyperbolic PDEs, is the quartic double-well potential  
$$
W(u)=c_1\|u\|^{4}-c_2\|u\|^{2}+c_3,\quad c_1,c_2,c_3>0,
$$
whose structure is extensively used in nonlinear wave equations with damping. Such models constitute a natural infinite-dimensional counterpart of \eqref{Equation}, see, for instance, \cite{8}. These examples illustrate that the hypotheses imposed on $W$ encompass both linear and nonlinear restoring forces, as in the damped mass-spring system, as well as potentials arising in gradient flows, optimization, and models of phase separation.

With these examples in mind, we are ready to present our main stability theorem, which shows that the qualitative behavior of \eqref{Equation} is remarkably robust and yields uniform control of all trajectories near the equilibrium.

\begin{theorem}\label{Teo1}
	Assume that the potential $W$ satisfies \hyperlink{W1}{$(W_1)$}-\hyperlink{W4}{$(W_4)$}. Then, for every $\varepsilon > 0$ and $a_0 > 0$, there exists $\delta > 0$ such that for all $a \in (0, a_0]$, and any solution $u_a$ of \eqref{Equation} with initial data satisfying
	$$
	\|u_a(0)-u_*\| + \|\dot{u}_a(0)\| < \delta,
	$$
	the following hold:
	\begin{itemize}
		\item[\emph{(a)}] $\displaystyle \sup_{t \ge 0} \left( \|u_a(t)-u_*\| + \|\dot{u}_a(t)\| \right) \le \varepsilon$;
		\item[\emph{(b)}] $\displaystyle \lim_{t \to +\infty} u_a(t) =u_*$ and $\displaystyle\lim_{t \to +\infty} \dot{u}_a(t) = 0$.
	\end{itemize}
	If, additionally, \hyperlink{W2}{$(W_2)$} and \hyperlink{W3}{$(W_3)$} hold with $\delta = \lambda = +\infty$, then there exists $R > 0$ such that, for all $a \in (0, a_0]$, and any solution $u_a$ of \eqref{Equation},
	\begin{itemize}
		\item[\emph{(c)}] $\displaystyle \sup_{t \ge 0} \left( \|u_a(t)-u_*\| + \|\dot{u}_a(t)\| \right) \le R$;
		\item[\emph{(d)}] $\displaystyle \lim_{t \to +\infty} u_a(t) =u_*$ and  $\displaystyle\lim_{t \to +\infty} \dot{u}_a(t) = 0$.
	\end{itemize}
	Moreover, under the additional assumption \hyperlink{W5}{$(W_5)$}, for each $a >0$ there exist $\gamma(a) > 0$ and $C(a)>0$ such that the corresponding solution $u_a$ satisfies the exponential decay estimate
	\begin{equation*}
		\|u_a(t)-u_*\| + \|\dot{u}_a(t)\| \le C(a) e^{-\gamma(a) t}, \quad \forall\, t \ge 0.
	\end{equation*}
	In particular, if $0<a_1<a_2$ and $a\in[a_1,a_2]$, then there exist $C_*>0$ and $\gamma_*>0$, depending only on $a_1,a_2$ such that
	\begin{equation*}
		\|u_a(t)-u_*\| + \|\dot{u}_a(t)\| \le C_* e^{-\gamma_* t},\quad \forall\, t \ge 0,\ \forall\, a\in[a_1,a_2].
	\end{equation*}
\end{theorem}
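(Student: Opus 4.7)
The plan is to build everything on the natural mechanical energy
\[
E(t)=\tfrac{1}{2}\|\dot u(t)\|^2+W(u(t)),
\]
which along any solution of \eqref{Equation} satisfies $\dot E(t)=-a\|\dot u(t)\|^2\le 0$, so $E(t)\le E(0)$ with an estimate that is independent of $a$. For part (a), I would fix $\eta\in(0,\min\{\varepsilon/2,\delta\}]$, set $m:=\inf\{W(u):\|u-u_*\|=\eta\}$, which is positive by (W2) and compactness of spheres, and choose $\delta_0>0$ small enough that $\|u_a(0)-u_*\|+\|\dot u_a(0)\|<\delta_0$ forces both $E(0)<m$ and $\sqrt{2E(0)}<\varepsilon/2$. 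A standard continuity/trapping argument then prevents the trajectory from reaching $\|u-u_*\|=\eta$ and delivers (a) with $\delta_0$ independent of $a$. For (c) under the global strengthening plus coercivity (W4), the inequality $W(u_a(t))\le E(0)$ confines $u_a$ to a sublevel set of $W$, which is compact by coercivity, giving a bound $R$ depending on initial data but uniform in $a\in(0,a_0]$.

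For the convergence statements (b) and (d), I would invoke LaSalle's invariance principle on the precompact trajectory. Its $\omega$-limit set $\Omega$ is non-empty, compact, invariant, and contained in $\{\dot E=0\}=\{\dot u=0\}$. Invariance combined with \eqref{Equation} forces $\nabla W(u)\equiv 0$ on $\Omega$, and (W3), locally inside the ball produced in (a) and globally in the setting of (d), yields that $\nabla W(u)=0$ implies $u=u_*$. Hence $\Omega=\{(u_*,0)\}$ and the claimed convergence follows.

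To upgrade convergence to an exponential rate under (W5), I would introduce the perturbed functional
\[
V_\epsilon(t)=E(t)+\epsilon\,\langle\dot u(t),u(t)-u_*\rangle+\tfrac{\epsilon a}{2}\|u(t)-u_*\|^2,
\]
with $\epsilon\in(0,a)$ to be tuned. Differentiating and using \eqref{Equation} to eliminate $\ddot u$ produces the clean identity
\[
\dot V_\epsilon(t)=-(a-\epsilon)\|\dot u(t)\|^2-\epsilon\,\langle\nabla W(u(t)),u(t)-u_*\rangle.
\]
On the portion of the trajectory lying inside $B_\lambda(u_*)$, (W5) furnishes $\langle\nabla W(u),u-u_*\rangle\ge\mu\|u-u_*\|^2$, giving strict dissipation in both $\dot u$ and $u-u_*$. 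The quadratic sandwich $\alpha\|u-u_*\|^2\le W(u)\le\beta\|u-u_*\|^2$ together with Young's inequality imply that for $\epsilon$ small enough, $V_\epsilon$ is equivalent to $\|\dot u\|^2+\|u-u_*\|^2$. A differential inequality $\dot V_\epsilon\le -c(a)V_\epsilon$ then yields exponential decay from the entry time $T$ into $B_\lambda(u_*)$ onward (such a $T$ exists by (b) or (d)); the transient segment $[0,T]$ is absorbed into the prefactor $C(a)$. For the uniform version on $[a_1,a_2]$, I would pick $\epsilon=\epsilon_*(a_1,a_2)$ so that the equivalence constants and the rate $c(a)\ge c_*$ depend only on $a_1,a_2$, and obtain a uniform bound on $T$ by integrating $-\dot E\ge a_1\|\dot u\|^2$ against the energy already consumed.

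I expect the main obstacle to lie in the last step, namely converting the LaSalle-type convergence into a genuine exponential rate while controlling the dependence on $a$. The bare energy is only weakly dissipative, so one must perturb it delicately enough to obtain a strict Lyapunov function without spoiling its equivalence with $\|\dot u\|^2+\|u-u_*\|^2$, and the parameter $\epsilon$ must be chosen to survive uniformly on $[a_1,a_2]$. A secondary subtlety is matching the local (W5) regime to the global convergence produced by LaSalle: bounding the entry time $T$ into $B_\lambda(u_*)$ uniformly in $a\in[a_1,a_2]$ requires a careful accounting of the dissipated energy before the local quadratic estimate can be activated.
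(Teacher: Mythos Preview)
Your proposal is correct and workable, but it follows a different organizational route from the paper. The paper builds the entire argument on the single strict Lyapunov functional
\[
V_a(x,y)=\tfrac12\|y\|^2+2W(x)+\tfrac12\|y+a(x-u_*)\|^2,
\]
which coincides (up to a factor of $2$) with your $V_\epsilon$ when $\epsilon=a/2$. Because $V_a$ depends on $a$, the paper needs a separate technical lemma to show that the sublevel set $\{V_a<m\}$ contains a ball around the origin uniformly for $a\in(0,a_0]$; this is what produces the uniform $\delta$ in part~(a). Your use of the bare energy $E$ for the trapping argument in (a) and (c) sidesteps this entirely, since $E$ does not depend on $a$: it is simpler and the uniformity in $a$ is automatic. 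For (b) and (d) the paper argues directly with the strict decrease of $V_a$ (ruling out a positive infimum by a compactness contradiction), whereas you invoke LaSalle on $E$; both are standard and equivalent in strength here. For the exponential decay, the two arguments are essentially identical, your $V_\epsilon$ being a one-parameter family containing the paper's choice.

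One remark on your closing worry: in the paper's formulation the exponential estimate is asserted only for initial data already inside the small ball produced by part~(a) (see its Lemma~\ref{L7}, which assumes $\|z_0\|_*<\sigma$). So the trajectory lies in $B_\lambda(u_*)$ from $t=0$ and the ``entry time $T$'' is zero; the delicate uniform-in-$a$ bound on $T$ that you anticipate is not actually needed to match the stated theorem.
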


The Theorem \ref{Teo1} asserts that the equilibrium $(u_*,0)$ of the system \eqref{Equation} is uniformly asymptotically stable with respect to the parameter $a\in(0,a_0]$: every solution starting sufficiently close to $(u_*,0)$ remains in an arbitrarily small neighborhood of the equilibrium and converges to it as $t\to +\infty$, with bounds that are uniform in $a$. The fundamental point of our study is the construction of a strict Lyapunov functional for the first-order formulation of the dissipative system
\begin{equation}\label{S-intro}
	\begin{cases}
		\dot{x}=y,\\[1mm]
		\dot{y}=-a\,y-\nabla W(x),
	\end{cases}
\end{equation}
which encodes the full dynamics of \eqref{Equation} in the phase space $\mathbb{R}^{2N}$. To this end, we introduce the Lyapunov functional
\begin{equation}
	V_a(x,y):=\frac{1}{2}\|y\|^2+2W(x)+\frac{1}{2}\|y+a(x-u_*)\|^2,
\end{equation}
which can be interpreted as a smooth perturbation of the mechanical energy
$$
E(x,y):=\frac12\|y\|^2+W(x).
$$
The functional $V_a$ allows us to obtain stability estimates that are uniform with respect to $a$. The assumptions \hyperlink{W1}{($W_1$)}-\hyperlink{W4}{($W_4$)} play a crucial role in ensuring that $V_a$ is strictly decreasing along non-trivial trajectories. The condition \hyperlink{W5}{($W_5$)} is only required to derive exponential decay of solutions, this decay cannot be uniform as $a\to 0^{+}$, since the corresponding rate $\gamma(a)$ necessarily satisfies $\gamma(a)\to 0$. Furthermore, the introduction of the Lyapunov functional $V_a$ therefore provides a robust framework for establishing the uniform asymptotic stability of \eqref{Equation}. Furthermore, we believe that $V_a$ highlights a mechanism that can be extended to non-homogeneous cases, to time-dampened systems, and to more general configurations such as trajectories over Hilbert space.

We emphasize that the conclusions of Theorem \ref{Teo1} are not restricted to a specific equilibrium. In fact, the result applies to any global minimizer $u_*$ of the potential $W$ satisfying assumptions \hyperlink{W1}{$(W_1)$}-\hyperlink{W4}{$(W_4)$}, yielding uniform asymptotic stability of the corresponding equilibrium $(u_*,0)$. Moreover, when the additional condition \hyperlink{W5}{$(W_5)$} holds, the convergence toward $(u_*,0)$ is exponential. In this sense, Theorem \ref{Teo1} provides a robust stability result around arbitrary admissible minima of the potential.

When $a=0$, the equation \eqref{Equation} reduces to the conservative system
$$
\ddot u(t) + \nabla W(u(t))=0,
$$
corresponding to the motion of a particle on $W$ without friction, so that mechanical energy is preserved. Here, $u(t)$ is the position of the particle, $\ddot{u}(t)$ its acceleration, and $\nabla W(u)$ the force derived from $W$. Under assumptions \hyperlink{W1}{($W_1$)}-\hyperlink{W4}{($W_4$)}, and in particular the coercivity of $W$, every solution remains globally bounded in $\mathbb{R}^{N}$, that is, 
$$
\sup_{t\in\mathbb{R}}\left(\|u(t)\|+\|\dot u(t)\|\right)<\infty.
$$
Thus blow-up cannot occur and no trajectory can converge asymptotically to the equilibrium. In this Hamiltonian-type regime, the combination of confinement and energy conservation leads to the typical behaviors of conservative dynamics: periodic orbits and transition solutions (heteroclinic or homoclinic). Hence, the case $a=0$ produces a qualitative change in the dynamics, yielding globally bounded but non-convergent trajectories.

In addition to the study of the asymptotic stability of \eqref{Equation}, two additional components complement our study. In Section \ref{Sec4}, we revisit the dynamics from the point of view of nonlinear semigroup theory, aiming for a comprehensive qualitative description of the dissipative flux generated by the first-order system \eqref{Equation}. This approach shows, in particular, the existence of a global attractor $\mathcal{A}$ for the corresponding semigroup, which is a compact, invariant subset of the phase space that attracts all bounded sets in forward time. Moreover, $\mathcal{A}$ contains the entire set of equilibria of the system, that is, 
$$\{(x,0)\in\mathbb{R}^{2N}:\nabla W (x)=0\}\subset \mathcal{A}.
$$ 
For readers interested in the general theory of global attractors and nonlinear semigroups, we refer to the classical monographs \cite{Livro-Alexandre,Hale,Temam}. Finally, Section \ref{Sec5} presents numerical simulations for some cases of \eqref{Equation} involving quadratic, exponential, and double-well potentials implemented in a computational environment based on \emph{R}. The simulations rely on a functional-programming framework supported by \texttt{tidyverse} (data manipulation), \texttt{deSolve} (time-integration of differential equations), and \texttt{ggplot2} (phase-plane visualization and graphical output). This computational component has a dual purpose: to geometrically illustrate the results developed in the previous sections and to reveal characteristics of the dynamics that remain inaccessible to our methods. For instance, when the potential $W$ possesses multiple global minima, such as $W(u)=(u^2-1)^2/4$, numerical evidence suggests that local maxima separating successive minima act as unstable equilibria. In the conservative regime $a=0$, simulations indicate that trajectories in the phase plane $\mathbb{R}^{2N}$ form closed periodic orbits. Furthermore, when $W(u)=u^2/2$, equation \eqref{Equation} is linear and exhibits the classical critical damping threshold at $a=2$. Our numerical experiments indicate that the same phenomenon persists in nonlinear settings: for instance, when $W(u)=(e^{u^{2}}-1)/2$, simulations suggest that $a=2$ again separates oscillatory (underdamped) dynamics from non-oscillatory (overdamped) decay. This numerical evidence points to a broader conjectural picture: nonlinear potentials with a single global minimizer may still inherit a critical damping regime analogous to the linear case. We have included complete executable \emph{R} scripts at the end of Section \ref{Sec5}. The reader can inspect, modify, or compile them to visualize trajectories, replicate phase plane geometry, and experiment with potentials or additional parameter regimes. Therefore, numerical simulation serves as a gateway to more in-depth mathematical and computational investigations.

The plan of the paper is as follows. Section \ref{Sec2} contains the proof of Theorem \ref{Teo1}, based on the construction of a strict Lyapunov functional adapted to the dissipative system. In Section \ref{Sec3} we briefly discuss the conservative case $a=0$ and the singular nature of the limit as $a\to 0^{+}$. Section \ref{Sec4} examines the global dynamics of nonlinear semigroup theory, providing a complementary viewpoint to the Lyapunov analysis of Section \ref{Sec2}. Finally, Section \ref{Sec5} presents numerical simulations for representative potentials, including quadratic, exponential, and double-well examples. 


\section{Damped System Stability}\label{Sec2}

Our main goal in this section is to prove Theorem \ref{Teo1} which provides information about the asymptotic stability of system \eqref{Equation}. Throughout this section we work under assumptions \hyperlink{W1}{$(W_1)$}-\hyperlink{W4}{$(W_4)$} on the potential $W$. The additional hypothesis \hyperlink{W5}{$(W_5)$} will be invoked explicitly only where required. Without loss of generality, we may assume that the equilibrium point $u_*$ of \eqref{Equation} is located at the origin, that is, $u_*=0$, as explained below.

\begin{remark}\label{remark-translation} \rm
To study the stability of solutions to the second-order dissipative system \eqref{Equation} we may assume, without loss of generality, that the equilibrium point is located at the origin, that is, $u_* = 0$. Indeed, if $u_* \in \mathbb{R}^N$ is a local minimum of $W$, we can define the translated potential $\widetilde{W}(u) := W(u + u_*),$ which satisfies $\widetilde{W}(0) = W(u_*)$ and inherits all the qualitative properties of $W$ near $u_*$. Now, for any solution $u$ of the original system, the function $v(t) = u(t) - u_*$ satisfies
$$
\ddot{v}(t) + a\,\dot{v}(t) + \nabla \widetilde{W}(v(t)) = 0.
$$
Consequently, the stability of the equilibrium $u=u_*$ for the original system is equivalent to the stability of the equilibrium $v=0$ for the transformed system. Thus, without loss of generality, we may assume that hypotheses \hyperlink{W1}{($W_1$)}-\hyperlink{W5}{($W_5$)} hold with the minimum of $W$ located at the origin. 
\end{remark}

We would like to clarify that, from now on, to avoid notational ambiguity, we will use $\|\cdot\|$ to denote the Euclidean norm on $\mathbb{R}^N$ with respect to the standard inner product $\langle \cdot, \cdot \rangle$, and $\|\cdot\|_*$ to denote the Euclidean norm on $\mathbb{R}^{2N}$ associated with the inner product $\langle \cdot, \cdot \rangle_*$. For a sufficiently regular solution $u$ of \eqref{Equation} defined on the time interval $I$, we consider the energy functional
\begin{equation}\label{Energy}
	E(u)(t) := \frac{1}{2} \|\dot{u}(t)\|^2 + W(u(t)),\quad t\in I.
\end{equation}
We emphasize that this expression represents the total energy of the system associated, where the term $\frac{1}{2} \|\dot{u}\|^2$ corresponds to the kinetic energy, and $W(u)$ is the potential energy. The energy functional \eqref{Energy} allows us to obtain the following result.

\begin{lemma}\label{L1}
	Every classical solution $u$ of \eqref{Equation} is globally defined.
\end{lemma}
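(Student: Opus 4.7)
I would begin by recasting \eqref{Equation} as the first-order system \eqref{S-intro} in $\mathbb{R}^{2N}$. Since $(W_1)$ gives $\nabla W\in C^1$, the Cauchy-Lipschitz theorem supplies a unique classical solution on some maximal forward interval $[0,T_{\max})$ with $T_{\max}\le+\infty$. The task then reduces to ruling out $T_{\max}<+\infty$ by deriving \emph{a priori} bounds on the pair $(u(t),\dot u(t))$ and invoking the standard blow-up alternative.

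The core estimate will come from the energy functional $E(u)$ defined in \eqref{Energy}. Differentiating $E(u)(t)$ along a trajectory and substituting $\ddot u=-a\dot u-\nabla W(u)$ from \eqref{Equation}, the cross-terms $\langle\nabla W(u),\dot u\rangle$ cancel and one obtains $\frac{d}{dt}E(u)(t)=-a\|\dot u(t)\|^2\le 0$. Hence $E(u)$ is non-increasing on $[0,T_{\max})$, so $E(u)(t)\le E(u)(0)$ for every $t$ in this interval.

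To convert this into pointwise bounds, I would use continuity of $W$ (from $(W_1)$) together with coercivity $(W_4)$ to show that every sublevel set $\{W\le M\}$ is closed and bounded, hence compact; in particular, $W$ attains a global minimum and $m:=\inf_{\mathbb{R}^N}W\in\mathbb{R}$. The energy inequality then gives $W(u(t))\le E(u)(0)$, confining $u(t)$ to the bounded set $\{W\le E(u)(0)\}$, and simultaneously $\tfrac12\|\dot u(t)\|^2\le E(u)(0)-m$, which furnishes a uniform bound on $\|\dot u(t)\|$.

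Finally, the conclusion is immediate: if $T_{\max}<+\infty$, the blow-up alternative would force $\|u(t)\|+\|\dot u(t)\|\to+\infty$ as $t\to T_{\max}^-$, contradicting the uniform bounds just obtained, so $T_{\max}=+\infty$. The only mildly delicate step is extracting $m>-\infty$ from $(W_1)$ and $(W_4)$; once $W$ is known to be bounded below, the remainder is a textbook energy-dissipation argument, and no further structural hypothesis on $W$ beyond continuity and coercivity is needed.
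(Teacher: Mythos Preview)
Your argument is correct and follows the same route as the paper: the identity $\frac{d}{dt}E(u)=-a\|\dot u\|^{2}\le 0$ bounds $W(u(t))$ by the initial energy, and coercivity $(W_4)$ then rules out finite-time blow-up. The paper's version differs only cosmetically---it argues by extracting a sequence $\|u(t_n)\|\to\infty$ and contradicting $W(u(t_n))\le E(u)(t_0)$ rather than bounding $u$ and $\dot u$ uniformly via $m=\inf W$---and it also asserts the backward direction $\omega_-=-\infty$, which you omit but which is not used anywhere else in the paper.
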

\begin{proof}
Let $u$ be a solution defined on the maximal interval $I = (\omega_-, \omega_+)$. Deriving $E(u)$ along the solution, we get
\begin{align*}
	\frac{d}{dt}E(u)(t) &= \langle \dot{u}(t), \ddot{u}(t) \rangle + \langle \nabla W(u(t)), \dot{u}(t) \rangle \\
	&= \langle \dot{u}(t), -a \dot{u}(t) - \nabla W(u(t)) \rangle + \langle \nabla W(u(t)), \dot{u}(t) \rangle \\
	&= -a \|\dot{u}(t)\|^2 \leq 0,
\end{align*}
since $a>0$. Thus, the energy is non-increasing on $I$. Suppose, by contradiction, that $\omega_{+} < +\infty$. Fix $t_{0} \in I$. Then, for every  $t \in (t_{0},\omega_{+})$, we have
$$
E(u)(t) \leq E(u)(t_{0}).
$$
In particular, this yields
\begin{equation}\label{Eq:1}
	 W(u(t)) \leq E(u)(t_0), \quad \forall ~t \in (t_0,\omega_+).
\end{equation}
Then, by the standard continuation theorem for ordinary differential equations, there exists a sequence $(t_n) \subset (t_0,\omega_+)$ such that 
$$
t_n \to \omega_+\quad\text{and}\quad \|u(t_n)\| \to +\infty\quad\text{as}\quad n \to \infty.
$$ 
However, due to the coercivity condition \hyperlink{W4}{$(W_4)$} on $W$, we have 
$$
W(u(t_n)) \to +\infty\quad\text{as}\quad n \to \infty,
$$ 
which contradicts \eqref{Eq:1}. Therefore, $\omega_+ = +\infty$. A similar argument shows that $\omega_- = -\infty$. Hence, the solution $u$ is globally defined for all $t \in \mathbb{R}$.
\end{proof}

By introducing the variables $x=u$ and $y=\dot{u}$, the second-order dissipative system \eqref{Equation} can be rewritten as a first-order system in $\mathbb{R}^{2N}$, namely
\begin{equation}\label{S}
	\left\{\begin{array}{ll} 
		\dot{x} = y, & \\ 
		\dot{y} = -a y - \nabla W(x). & 
	\end{array}\right.
\end{equation}
This formulation is convenient for studying the qualitative behavior of solutions, since it allows the use of Lyapunov techniques. From condition \hyperlink{W2}{$(W_2)$}, we know that $(x,y) = (0,0) \in \mathbb{R}^{2N}$ is an equilibrium point of system \eqref{S}. To analyze the dynamics close to $(0,0)$, we introduce the Lyapunov functional $V_a : B_\eta(0)\times \mathbb{R}^N \to \mathbb{R}$ defined by
\begin{equation}\label{Va}
	V_a(x,y) := \frac{1}{2} \|y\|^2 + 2W(x) + \frac{1}{2} \|y + a x\|^2,
\end{equation}
where $\eta=\min\{\delta,\lambda\}$ with $\delta$ and $\lambda$  given in \hyperlink{W2}{$(W_2)$}-\hyperlink{W3}{$(W_3)$}.
	
\begin{lemma}\label{L2}
	The functional $V_a$ is a strict Lyapunov function for system \eqref{S} at the equilibrium $(0,0)$. More precisely, $V_a(0,0)=0$, and for all $(x,y)\in B_\eta(0)\times \mathbb{R}^N$ with $(x,y)\neq(0,0)$ one has
	$$
	0 < V_a(x,y)\quad\text{and}\quad\langle \nabla V_a(x,y), F_a(x,y) \rangle_* < 0,
	$$
	where $F_a(x,y)$ is the vector field of system \eqref{S}, which is given by
	\begin{equation}\label{VectorField}
		F_a(x,y) := (y, -ay - \nabla W(x)).
	\end{equation}
\end{lemma}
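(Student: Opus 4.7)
The plan is to verify the two required properties by direct computation, exploiting the algebraic structure built into $V_a$.

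For positivity, note first that $V_a(0,0) = 2W(0) = 0$ by hypothesis $(W_2)$ (with $u_* = 0$ via Remark \ref{remark-translation}). For $(x,y) \in B_\eta(0) \times \mathbb{R}^N$ with $(x,y) \neq (0,0)$, every term in the definition of $V_a$ is nonnegative, so it suffices to show that at least one is strictly positive. If $x \neq 0$, then since $B_\eta(0) \subset B_\delta(0)$, hypothesis $(W_2)$ gives $W(x) > 0$; if instead $x = 0$ and $y \neq 0$, the kinetic term $\frac{1}{2}\|y\|^2$ is positive. Either way, $V_a(x,y) > 0$.

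For the Lie derivative along $F_a$, I would compute the gradient of $V_a$ in block form, obtaining
$$\partial_x V_a(x,y) = 2\nabla W(x) + a(y + ax), \qquad \partial_y V_a(x,y) = 2y + ax,$$
and then pair it with $F_a(x,y) = (y, -ay - \nabla W(x))$. Expanding the resulting inner products, the cross-terms $\pm 2\langle \nabla W(x), y\rangle$ and $\pm a^2\langle x, y\rangle$ cancel in pairs, leaving the identity
$$\langle \nabla V_a(x,y), F_a(x,y)\rangle_* = -a\|y\|^2 - a\langle \nabla W(x), x\rangle.$$
This clean expression is precisely what motivates the perturbation $\frac{1}{2}\|y+ax\|^2$ added to the mechanical energy $\frac{1}{2}\|y\|^2 + W(x)$: the natural energy alone yields only $-a\|y\|^2$ along the flow, which degenerates on the entire hyperplane $\{y=0\}$, whereas the augmented functional recovers the crucial $-a\langle \nabla W(x), x\rangle$ contribution that discriminates points with $x \neq 0$.

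The conclusion then follows from hypothesis $(W_3)$: since $\eta \leq \lambda$, every $x \in B_\eta(0)\setminus\{0\}$ satisfies $\langle \nabla W(x), x\rangle > 0$, making the second term strictly negative; in the complementary case $x = 0, y \neq 0$, the term $-a\|y\|^2$ is itself strictly negative. Hence $\langle \nabla V_a, F_a\rangle_* < 0$ on $(B_\eta(0)\times \mathbb{R}^N)\setminus\{(0,0)\}$. There is no genuine obstacle in the argument; the only delicate point is the algebraic bookkeeping producing the cancellations, together with the complementary use of $(W_2)$ and $(W_3)$ to cover the two disjoint cases $x \neq 0$ and $x = 0, y \neq 0$.
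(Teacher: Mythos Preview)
Your proposal is correct and follows essentially the same route as the paper: direct verification that $V_a$ is positive definite via $(W_2)$, explicit computation of $\nabla V_a$ and its pairing with $F_a$ yielding the identity $-a\|y\|^2 - a\langle \nabla W(x), x\rangle$, and the conclusion via $(W_3)$. If anything, your treatment is slightly more careful than the paper's, since you explicitly separate the cases $x\neq 0$ and $x=0,\ y\neq 0$ in both the positivity and the strict-decrease arguments, whereas the paper glosses over the degenerate case $x=0$.
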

\begin{proof}
First, let's see that, by condition \hyperlink{W2}{$(W_2)$}, $V_a(0,0) = 0$ and, for $(x,y) \neq (0,0)$ with $(x,y)\in B_\eta(0)\times \mathbb{R}^N$, we have
$$
V_a(x,y) = \frac{1}{2} \|y\|^2 + 2W(x) + \frac{1}{2} \|y + a x\|^2 \geq W(x) > 0.
$$
Moreover, as $\nabla V_a(x,y) = \left( 2\nabla W(x) + a^2 x + a y,\; 2y + a x \right),$ we obtain
$$
\langle \nabla V_a(x,y), F_a(x,y) \rangle_* = \left\langle 2\nabla W(x) + a^2 x + a y,\; y \right\rangle + \left\langle 2y + a x,\; -a y - \nabla W(x) \right\rangle.
$$
Compute each term
$$
\left\langle 2\nabla W(x) + a^2 x + a y, y \right\rangle= 2\langle \nabla W(x), y \rangle + a^2 \langle x, y \rangle + a \|y\|^2
$$
and 
$$
\left\langle 2y + a x,-a y - \nabla W(x) \right\rangle= -2a \|y\|^2 - 2\langle y, \nabla W(x) \rangle - a^2 \langle x, y \rangle - a \langle x, \nabla W(x) \rangle.
$$
Adding these expressions, we obtain
$$
\langle \nabla V_a(x,y), F_a(x,y) \rangle_* = -a \|y\|^2 - a \langle x, \nabla W(x) \rangle.
$$
By condition \hyperlink{W3}{$(W_3)$}, $\langle \nabla W(x), x \rangle > 0$ for all $x\in  B_\eta(0)\setminus\{0\}$, and since $a > 0$, it follows that
$$
\langle \nabla V_a(x,y), F_a(x,y) \rangle_* < 0,\,  \text{ for all }(x,y)\in B_\eta(0)\times \mathbb{R}^N \text{ with }(x,y) \neq (0,0).
$$
This proves that $V_a$ is a strict Lyapunov function at the origin for the system \eqref{S}.
\end{proof}

We note that the following functional 
$$
E(x,y):=\frac12\|y\|^2 + W(x),\quad (x,y)\in B_\eta(0)\times\mathbb{R}^N,
$$
induced by the energy defined in \eqref{Energy}, acts as a Lyapunov function for the equilibrium $(0,0)$ of system \eqref{S}. However, it is not strict, since
$$
\langle \nabla E(x,y), F_a(x,y) \rangle_* =-a \|y\|^2 \le 0,\quad (x,y)\in B_\eta(0)\times \mathbb{R}^N.
$$
In order to obtain a strict Lyapunov function, it is convenient to perturb the physical energy by adding a suitable mixed term. This leads to the functional $V_a$ introduced in \eqref{Va}, which no longer has a direct mechanical interpretation, but presents interesting analytical properties for the study of equilibrium stability.

We now introduce a technical lemma that will be crucial in the analysis of uniform stability with respect to the damping parameter $a>0$.

\begin{lemma}\label{L3}
Let $m$, $a_0$ and $\kappa$ be positive real numbers. Then, the point $ (0,0) \in \mathbb{R}^{2N} $ is an interior point of the set
$$
\mathcal{W} := \left\{ (x,y) \in B_\kappa(0,0) : V_a(x,y) < m \text{ for all } a \in (0, a_0] \right\}.
$$
\end{lemma}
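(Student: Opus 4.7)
The plan is to construct a single continuous upper bound for the family $\{V_a\}_{a\in(0,a_0]}$ that vanishes at the origin, and then use continuity to pick a small ball on which this bound is strictly less than $m$. The payoff of passing through a uniform bound is precisely that it kills the dependence on $a$, which is what the set $\mathcal{W}$ requires.

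Concretely, I would first observe that for any $a\in(0,a_0]$, the elementary inequality $\|y+ax\|^2\le 2\|y\|^2+2a^2\|x\|^2\le 2\|y\|^2+2a_0^2\|x\|^2$ yields, directly from the definition \eqref{Va},
\begin{equation*}
V_a(x,y)\;\le\;\frac{3}{2}\|y\|^2+2W(x)+a_0^2\|x\|^2\;=:\;\Psi(x,y).
\end{equation*}
The right-hand side $\Psi$ is independent of $a$, continuous on $\mathbb{R}^{2N}$ by hypothesis \hyperlink{W1}{$(W_1)$}, and satisfies $\Psi(0,0)=0$ thanks to \hyperlink{W2}{$(W_2)$} (which, under the convention of Remark \ref{remark-translation}, gives $W(0)=0$).

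With the uniform majorant $\Psi$ in hand, the continuity of $\Psi$ at the origin furnishes some $r>0$ such that $\Psi(x,y)<m$ whenever $\|(x,y)\|_*<r$. Choosing $r_0:=\min\{r,\kappa\}$, every point of the open ball $B_{r_0}(0,0)\subset\mathbb{R}^{2N}$ lies in $B_\kappa(0,0)$ and satisfies $V_a(x,y)\le\Psi(x,y)<m$ for all $a\in(0,a_0]$. Hence $B_{r_0}(0,0)\subset\mathcal{W}$, which exhibits $(0,0)$ as an interior point of $\mathcal{W}$.

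I do not foresee a real obstacle here: the only subtlety is making sure the $a$-dependence in the cross term $\|y+ax\|^2$ is absorbed by an $a$-free bound, and the crude expansion above already suffices because $a\le a_0$ is assumed. The constraint $(x,y)\in B_\kappa(0,0)$ is handled trivially by taking $r_0\le\kappa$, so no additional geometric argument involving the set $B_\eta(0)\times\mathbb{R}^N$ (on which $V_a$ was originally defined) is needed provided $r_0$ is chosen small enough to also lie inside that domain if one wishes to view $V_a$ there.
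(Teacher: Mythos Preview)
Your proof is correct and follows essentially the same approach as the paper: both construct an $a$-independent continuous majorant for $V_a$ that vanishes at the origin (the paper expands the square and uses Cauchy--Schwarz on the cross term, while you use $\|y+ax\|^2\le 2\|y\|^2+2a_0^2\|x\|^2$), then invoke continuity to find a small ball contained in $\mathcal{W}$. Your packaging via the single function $\Psi$ is in fact slightly cleaner than the paper's explicit $\varepsilon$--$\delta$ computation, but the underlying idea is identical.
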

\begin{proof}
Firstly, we rewrite $V_a$ in an equivalent form. For $(x,y) \in B_\eta(0)\times \mathbb{R}^N$, one has
$$
V_a(x,y) = \|y\|^2 + 2W(x) + a \langle y, x \rangle + \frac{a^2}{2}  \|x\|^2.
$$
Using the Cauchy-Schwarz inequality, we deduce, for any $(x,y) \in  B_\eta(0)\times \mathbb{R}^N$ and for all $a\in (0,a_0]$, that 
\begin{equation}\label{Eq0}
	V_a(x,y)\le \|y\|^2 + 2|W(x)| + a_0 \|x\| \|y\| + \frac{a_0^2}{2} \|x\|^2.
\end{equation}
Now, since $W$ is continuous at $x = 0$ and $W(0) = 0$, given $\varepsilon >0$ satisfying $\varepsilon<\dfrac{m}{3}$, then there exists $\rho>0$ such that 
$$
\|x\| < \rho \quad  \Rightarrow\quad  |W(x)| < \frac{\varepsilon}{2}.
$$
We choose $r > 0$ small enough so that	
$$
r < \min\left\{\kappa, \rho, \sqrt{ \frac{m - \varepsilon}{1 + a_0 + \frac{a_0^2}{2} } } \right\}.
$$
Consequently, if $\|(x,y)\|_* < r$, then $\|x\| < r$ and $\|y\| < r$, and hence, by \eqref{Eq0} we obtain
$$
V_a(x,y) < r^2 \left(1 + a_0 + \dfrac{a_0^2}{2}\right) + \varepsilon,\quad \forall \, a\in (0,a_0].
$$
By choosing $r$, we conclude that 	
$$
V_a(x,y)<m \, \text{ for all }  (x,y)\in B_r(0,0)\text{ and } a\in (0,a_0].
$$
Therefore, $B_r(0,0) \subset \mathcal{W}$, that is, $(0,0) \in \operatorname{int}(\mathcal{W})$, which concludes the proof.
\end{proof}

From now on, we denote by $\varphi_a(t;z_0) = (x_a(t), y_a(t))$ the $C^1$-class solution to system \eqref{S} in $\mathbb{R}^{2N}$, associated with the parameter $a > 0$, with initial date $z_0=(x_0,y_0)$ at $t=0$, that is, $\varphi_a(0;z_0)=z_0$. Thereby, $\varphi_a(t;z_0)$ satisfies the following system of differential equations
\begin{equation*}
	\left\{
	\begin{array}{lll} 
		\dot{x}_a(t) = y_a(t), \\[4pt]
		\dot{y}_a(t) = -a\, y_a(t) - \nabla W(x_a(t)),\\
		x_a(0)=x_0,\quad y_a(0)=y_0.
	\end{array}
	\right.
\end{equation*}
Equivalently, the system can be written in vector form as
\begin{equation}\label{SF}
		\left\{
	\begin{array}{ll} 
		\dot{\varphi}_a(t;z_0) = F_a(\varphi_a(t;z_0)), \\[4pt]
		\varphi_a(0;z_0)=z_0.
	\end{array}
	\right.
\end{equation}
By Lemma \ref{L1}, we can conclude that $\varphi_a$ is a globally defined solution.

\begin{remark}\label{OBS2} \rm
If there exists $t_0\in\mathbb{R}$ such that $\varphi_a(t_0;z_0)=(0,0)$, then $\varphi_a(t;z_0)=(0,0)$ for all $t\in \mathbb{R}$. Indeed, by \hyperlink{W1}{$(W_1)$}, $F$ is locally Lipschitz, and so, we have that by the Picard-Lindelöf Theorem, the Cauchy problem
$$
\phi'(t)=F(\phi(t)),\qquad \phi(t_0)=(0,0),
$$
admits a unique solution. Since $F(0,0)=(0,0)$, the constant function $\phi\equiv(0,0)$ is a solution, by uniqueness, $\varphi_a(t;z_0)=(0,0)$ for any $t\in \mathbb{R}$.
\end{remark}

We introduce $V_a$ to capture the effect of the damping term. The intuition is: as long as the trajectory $\varphi_a(t;z_0)$ remains in a neighborhood of equilibrium (within $B_\eta(0)\times\mathbb{R}^N$), this modified energy along the trajectory cannot increase in time. We will see this precisely in the following lemma.

\begin{lemma}\label{L4}
	If $I\subset\mathbb{R}$ is an interval such that $\varphi_a(t;z_0)\in B_\eta(0)\times \mathbb{R}^N$ for all $t\in I$, then the function $t\mapsto V_a(\varphi_a(t;z_0))$ is non-increasing on $I$.
\end{lemma}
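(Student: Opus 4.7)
The plan is to reduce the lemma directly to the pointwise computation already established in the proof of Lemma \ref{L2}. Since $\varphi_a$ is a $C^1$ solution of \eqref{SF} and $V_a \in C^1(B_\eta(0)\times\mathbb{R}^N)$ by \hyperlink{W1}{$(W_1)$}, the map $t\mapsto V_a(\varphi_a(t;z_0))$ is differentiable on $I$, and the chain rule gives
$$
\frac{d}{dt}V_a(\varphi_a(t;z_0))=\langle \nabla V_a(\varphi_a(t;z_0)),\dot{\varphi}_a(t;z_0)\rangle_*=\langle \nabla V_a(\varphi_a(t;z_0)),F_a(\varphi_a(t;z_0))\rangle_*.
$$

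Next I would invoke the identity obtained in the proof of Lemma \ref{L2}, namely
$$
\langle \nabla V_a(x,y),F_a(x,y)\rangle_*=-a\|y\|^2-a\langle x,\nabla W(x)\rangle,
$$
which is a purely algebraic identity valid for every $(x,y)\in \mathbb{R}^{2N}$ (no sign information on $W$ was used in its derivation). Applying it to $(x,y)=\varphi_a(t;z_0)=(x_a(t),y_a(t))$ yields
$$
\frac{d}{dt}V_a(\varphi_a(t;z_0))=-a\|y_a(t)\|^2-a\langle x_a(t),\nabla W(x_a(t))\rangle,\quad t\in I.
$$

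The final step is to check that the right-hand side is non-positive on $I$. By hypothesis $x_a(t)\in B_\eta(0)\subset B_\lambda(0)$ for every $t\in I$. If $x_a(t)\neq 0$, condition \hyperlink{W3}{$(W_3)$} (applied after the translation $u_*=0$ of Remark \ref{remark-translation}) gives $\langle \nabla W(x_a(t)),x_a(t)\rangle>0$; if $x_a(t)=0$, then trivially $\langle \nabla W(0),0\rangle=0$. In both cases $\langle x_a(t),\nabla W(x_a(t))\rangle\ge 0$, and since $a>0$ and $\|y_a(t)\|^2\ge 0$, we conclude
$$
\frac{d}{dt}V_a(\varphi_a(t;z_0))\le 0\quad\text{for all }t\in I,
$$
which shows that $t\mapsto V_a(\varphi_a(t;z_0))$ is non-increasing on $I$. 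There is no serious obstacle in this argument; the only point requiring mild care is handling the degenerate case $x_a(t)=0$, where \hyperlink{W3}{$(W_3)$} does not apply directly but the scalar product vanishes identically.
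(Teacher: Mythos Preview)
Your proof is correct and follows essentially the same route as the paper: both apply the chain rule to $t\mapsto V_a(\varphi_a(t;z_0))$, identify the derivative with $\langle \nabla V_a,F_a\rangle_*$ via \eqref{SF}, and then invoke the computation from Lemma~\ref{L2} to conclude non-positivity. Your treatment is in fact slightly more explicit, since you separate the case $x_a(t)=0$ where \hyperlink{W3}{$(W_3)$} does not apply directly, whereas the paper simply cites Lemma~\ref{L2} for the inequality $\dot h\le 0$.
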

\begin{proof}
To prove the lemma, we define the following real function 
$$
h(t) = V_a(\varphi_a(t;z_0)) \quad\text{for}\quad t \in I.
$$ 
By definition, $h$ is of class $C^1$, since both $V_a$ and $\varphi_a$ are $C^1$ functions. Thus, by the Chain Rule, 
\begin{equation}\label{1eq}
	\dot{h}(t)=\frac{d}{dt}(V_a\circ\varphi_a)(t)=\sum_{i=1}^{2N}\frac{\partial V_a}{\partial x_i}(\varphi_a(t;z_0)) \dot{\varphi}_{a,i}(t;z_0),\quad \forall\,\, t\in I,
\end{equation}
where $\varphi_a(t;z_0)=(\varphi_{a,1}(t;z_0),\dots,\varphi_{a,2N}(t;z_0))$. On the other hand, 
\begin{equation}\label{2eq}
	\sum_{i=1}^{2N}\frac{\partial V_a}{\partial x_i}(\varphi_a(t;z_0)) \dot{\varphi}_{a,i}(t;z_0)=\langle\nabla V(\varphi_a(t;z_0)),\dot{\varphi}_a(t;z_0)\rangle_*,\quad \forall\,\, t\in I,
\end{equation}
and so, by \eqref{1eq} and \eqref{2eq} it follows that
$$
\dot{h}(t)=\langle\nabla V_a(\varphi_a(t;z_0)),\dot{\varphi}_a(t;z_0)\rangle_*,~~\forall\,\, t\in I.
$$
Now, since $\varphi_a$ satisfies \eqref{SF}, we obtain 
\begin{equation}\label{4eq}
	\dot{h}(t)=\langle\nabla V_a(\varphi_a(t;z_0)),F_a(\varphi_a(t;z_0))\rangle_*\,\text{ for all } t\in I.
\end{equation}
	Therefore, by Lemma \ref{L2}, $\dot{h}(t)\leq 0$ for all $t\in I$, and consequently, $h$ is a non-increasing function on $I$, which proves the result.	
\end{proof}

We are now in a position to prove the following uniform asymptotic stability with respect to the equilibrium point $(0,0)$ of system \eqref{S}.

\begin{lemma}\label{L5}
Given any $\epsilon > 0$ and $a_0>0$, there exists $\sigma > 0$ such that, for every $a\in (0,a_0]$ and for every solution $\varphi_a(t;z_0)$ of system \eqref{S}, corresponding to the damping parameter $a$, satisfying
$$
\|z_0\|_* < \sigma,
$$
we have
$$
\|\varphi_a(t;z_0)\|_* < \epsilon, \quad \text{for all } t \geq 0,\quad \text{and}\quad \lim_{t \to +\infty} \varphi_a(t;z_0) = (0,0).
$$
In other words, the asymptotic stability of the equilibrium (0,0) is uniform with respect to the damping parameter.
\end{lemma}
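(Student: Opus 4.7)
The plan is to exploit the Lyapunov functional $V_a$ from Lemma \ref{L2} together with the uniform-in-$a$ interior property provided by Lemma \ref{L3}. Without loss of generality I restrict to $\epsilon\in(0,\eta)$, so that the closed ball $\overline{B_\epsilon(0,0)}\subset\mathbb{R}^{2N}$ projects into the domain $B_\eta(0)\times\mathbb{R}^N$ on which the Lyapunov machinery is valid.

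The central step is to extract from $V_a$ a lower bound on the sphere $S_\epsilon:=\{(x,y)\in\mathbb{R}^{2N}:\|(x,y)\|_*=\epsilon\}$ that does not depend on $a$. Since $\tfrac12\|y+ax\|^2\ge 0$, one has
\[
V_a(x,y)\ge \tfrac12\|y\|^2+2W(x)\qquad\text{for every }a>0 \text{ and every }(x,y)\in B_\eta(0)\times\mathbb{R}^N.
\]
The right-hand side is continuous and strictly positive on $S_\epsilon$: either $y\ne 0$ and the kinetic term is positive, or $y=0$ and then $0<\|x\|=\epsilon<\delta$ forces $W(x)>0$ by \hyperlink{W2}{$(W_2)$}. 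By compactness of $S_\epsilon$, there is a constant $m_0>0$ such that $V_a(x,y)\ge m_0$ for all $(x,y)\in S_\epsilon$ and all $a\in(0,a_0]$.

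Next, I would invoke Lemma \ref{L3} with $m=m_0$ and $\kappa=\epsilon$ to obtain $\sigma>0$ such that $V_a(x,y)<m_0$ for all $(x,y)\in B_\sigma(0,0)$ and all $a\in(0,a_0]$. For any $z_0$ with $\|z_0\|_*<\sigma$, a continuation/contradiction argument shows that the trajectory cannot reach $S_\epsilon$: otherwise there would be a first time $t_1>0$ with $\|\varphi_a(t_1;z_0)\|_*=\epsilon$, and on $[0,t_1]$ the trajectory would stay in $\overline{B_\epsilon(0,0)}\subset B_\eta(0)\times\mathbb{R}^N$, so Lemma \ref{L4} would give
\[
V_a(\varphi_a(t_1;z_0))\le V_a(z_0)<m_0,
\]
contradicting the sphere bound. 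Hence $\|\varphi_a(t;z_0)\|_*<\epsilon$ for all $t\ge 0$, uniformly in $a\in(0,a_0]$.

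For the convergence statement I would run a LaSalle-type invariance argument. The orbit $\{\varphi_a(t;z_0):t\ge 0\}$ is pre-compact, so its $\omega$-limit set $\omega(z_0)$ is non-empty, compact and positively invariant. Since $t\mapsto V_a(\varphi_a(t;z_0))$ is non-increasing (Lemma \ref{L4}) and bounded below by $0$, it converges to some $L\ge 0$; by continuity $V_a\equiv L$ on $\omega(z_0)$, and invariance forces $\langle\nabla V_a,F_a\rangle_*\equiv 0$ along trajectories issuing from $\omega(z_0)$. By Lemma \ref{L2} this can only occur at $(0,0)$, so $\omega(z_0)=\{(0,0)\}$ and $\varphi_a(t;z_0)\to(0,0)$. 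The main obstacle is precisely producing the $a$-uniform lower bound $m_0$ on $S_\epsilon$; once this is secured the threshold $\sigma$ delivered by Lemma \ref{L3} depends only on $\epsilon$ and $a_0$, and the remainder is a standard trapping plus LaSalle argument.
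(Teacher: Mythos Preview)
Your proposal is correct and follows essentially the same route as the paper: a uniform lower bound for $V_a$ on the $\epsilon$-sphere (your estimate $V_a\ge\tfrac12\|y\|^2+2W(x)$ is in fact cleaner than the paper's $V_a\ge W(x)$), Lemma~\ref{L3} to produce $\sigma$, and the first-exit-time contradiction for trapping. The only stylistic difference is in the convergence step: where you invoke LaSalle invariance, the paper carries out that argument by hand---showing $V_a(\varphi_a(t;z_0))\downarrow d$, assuming $d>0$, trapping the orbit in a compact annulus, and integrating the uniform negative bound on $\langle\nabla V_a,F_a\rangle_*$ there to a contradiction---which is precisely the content of LaSalle in this setting.
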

\begin{proof}
For each $a>0$ let $V_a:B_\eta(0)\times \mathbb{R}^N\to \mathbb{R}$ the Lyapunov function for $(0,0) \in \mathbb{R}^{2N}$ defined as in \eqref{Va}. Now, let $\epsilon > 0$ such that
$$
\overline{B_\epsilon(0,0)} = \left\{ (x,y) \in \mathbb{R}^{2N} : \|(x,y)\|_* \leq \epsilon \right\} \subset B_\eta(0)\times \mathbb{R}^N.
$$
We consider the following set 
$$
M = \left\{ V_a(x,y) : \|(x,y)\|_* = \epsilon \text{ and } a>0 \right\}.
$$
We observe that $M$ is bounded below in $\mathbb{R}$. Indeed, just note that $V_a(x,y)\ge 0$ for all $a>0$ and $(x,y)\in B_\eta(0)\times \mathbb{R}^N$. Thereby, 
$$
m := \inf M \geq 0.
$$
We claim that $m > 0$. Indeed, since 
$$
V_a(x,y)\ge W(x)\,\text{for all } a>0\text{ and }(x,y)\in M,
$$
and $W$ is continuous, as $W$ is restricted to a compact subset of $B_\eta(0)$, we conclude by \hyperlink{W1}{$(W_1)$} that $m > 0$. Now, we define the set
$$
\mathcal{W}= \left\{ (x,y) \in B_\epsilon(0,0) : V_a(x,y) < m \text{ for all }a\in (0,a_0]\right\}.
$$
From Lemma \ref{L3}, $(0,0)$ is an interior point of $\mathcal{W}$. So, there exist $\sigma  \in (0, \epsilon)$ such that $B_\sigma(0,0) \subset \mathcal{W}$.
	
\noindent\textbf{\hypertarget{C1}{Claim 1:}} If $\varphi_a(t;z_0)$ is a solution of system \eqref{S}, corresponding to the damping parameter $a\in (0,a_0]$, with initial condition $ \varphi_a(0;z_0)=z_0\in B_\sigma (0,0)$, then
$$
\|\varphi_a(t;z_0)\|_* < \epsilon \quad \text{for all } t \geq 0.
$$

To see this, note first that by Lemma \ref{L1} we can conclude that $\varphi_a$ is globally defined. Now, we assume by contradiction that Claim \hyperlink{C1}{1} is false. Then, for some $t_{0} > 0$, we must have $\varphi_a(t_{0};z_0) \notin B_\epsilon(0,0)$. Since $z_0 \in B_\epsilon(0,0)$, by continuity, there exists $t_* \in (0, t_{0})$ such that
\begin{equation}\label{3eq}
	\varphi_a(t;z_0) \in B_\epsilon(0,0)\,\text{ for all } \, t \in (0, t_*)\text{ and }\|\varphi_a(t_*;z_0)\|_* = \epsilon.
\end{equation}
Thus, $\varphi_a(t_*;z_0) \in \partial B_\epsilon(0,0)$ and hence not in $\mathcal{W}$. Consequently,
\begin{equation}\label{ineq1}
	V_a(\varphi_a(t_*;z_0)) \geq m > V_a(\varphi_a(0;z_0))=V(z_0).
\end{equation}
On the other hand, from \eqref{3eq}, we can apply Lemma \ref{L4} to ensure that
$$
V_a(\varphi_a(t_*;z_0)) \leq V_a(z_0),
$$
contradicting inequality \eqref{ineq1}. This contradiction implies that $\varphi_a(t;z_0) \in B_\epsilon(0,0)$ for all $t \geq 0$, proving Claim \hyperlink{C1}{1}.
	
\noindent\textbf{\hypertarget{C2}{Claim 2:}} If $z_0 \in B_ \sigma(0,0)$, then
$$
\lim_{t \to +\infty} \varphi_a(t;z_0) = (0,0).
$$
	
First, we observe that by Remark \ref{OBS2} we can assume without loss of generality that $\varphi_a(t;z_0) \neq (0,0)$ for any $t\ge 0$. On the other hand, from Claim \hyperlink{C1}{1}, the solution $\varphi_a(t;z_0)$ remains in $B_\epsilon(0,0)$ for all $t \geq 0$. Let us consider the function again
$$
h(t) := V_a(\varphi_a(t;z_0)), \quad t \geq 0.
$$
We have already discussed in Lemma \ref{L4} that $h \in C^1([0, \infty))$ and, because of Lemma \ref{L2}, 
$$
\dot{h}(t) = \langle \nabla V_a(\varphi_a(t;z_0)), F_a(\varphi_a(t;z_0)) \rangle_* < 0, \, \text{ for all }t > 0,
$$
from which it follows that $h$ monotonically decreasing. Moreover, $h$ is bounded below. Therefore, there exists $d \geq 0$ satisfying
$$
V_a(\varphi_a(t;z_0))\ge d \, \text{ for any }\, t\ge 0\, \text{ and }\, \lim_{t \to +\infty} V(\varphi_a(t;z_0)) = d.
$$
We claim that $d = 0$. Suppose by contradiction that $d > 0$. By continuity of $V$, and the fact that $V(0,0) = 0$, there exists $r \in (0, \epsilon)$ such that
$$
V_a(z) < d \quad \text{for all } z \in B_r(0,0).
$$
Thus, $\varphi_a(t;z_0) \notin B_r(0,0)$ for all $t \geq 0$. Since the function $f(z) = \langle \nabla V_a(z), F_a(z) \rangle_*$ is continuous and $V_a$ is strict Lyapunov functional at the origin, and the set $\overline{B_\epsilon(0,0)} \setminus B_r(0,0)$ is compact, there exists $b > 0$ such that
$$
\langle \nabla V_a(z), F_a(z) \rangle_* \leq -b \quad \text{for all } z \in \overline{B_\epsilon(0,0)} \setminus B_r(0,0).
$$
As $\varphi_a(t;z_0) \in \overline{B_\epsilon(0,0)} \setminus B_r(0,0)$, we get
$$
\dot{h}(t) \leq -b \quad \text{for all } t \geq 0.
$$
Integrating, we obtain
$$
V_a(\varphi_a(t;z_0)) - V_a(z_0) \leq -bt,\, \text{ for all }t>0,
$$
which results in
$$
V_a(\varphi_a(t;z_0)) \to -\infty \quad \text{as } t \to +\infty,
$$
which is a contradiction. Therefore, $d = 0$. To finish, take any sequence $t_n \to +\infty$. Since $\varphi_a(t_n;z_0) \in B_\epsilon(0,0)$ for any $n\in\mathbb{N}$, by Bolzano-Weierstrass, there exists a subsequence $t_{n_k}$ such that
$$
\varphi_a(t_{n_k};z_0) \to w \in \overline{B_\epsilon(0,0)}.
$$
By continuity of $V_a$,
$$
\lim_{k \to +\infty} V_a(\varphi_a(t_{n_k};z_0)) = V_a(w) = 0,
$$
which implies in $w = (0,0)$, because $V_a(z) > 0$ for all $z \neq (0,0)$. Hence,
$$
\lim_{k \to+ \infty} \varphi_a(t_{n_k};z_0) = (0,0).
$$
As every sequence $t_n \to +\infty$ has a subsequence converging to $(0,0)$, it follows that
$$
\lim_{t \to +\infty} \varphi_a(t;z_0) = (0,0),
$$
completing the proof.
\end{proof}

Let $\varphi_a(t;z_0)$ denote the solution of \eqref{SF} with initial condition $\varphi_a(0;z_0)=z_0$. The {\it domain (basin) of attraction} of the equilibrium $(0,0)$ is defined as
$$
\mathcal{A}_a(0):= \Big\{ z_0\in\mathbb{R}^{2N} \big| \lim_{t\to+\infty}\varphi_a(t;z_0)=(0,0)\Big\}.
$$
Geometrically, $\mathcal{A}_a(0)$ consists of all initial states whose trajectories converge asymptotically to the equilibrium. For any fixed $a_0>0$, the arguments developed in Lemma \ref{L5} show the existence of $\sigma>0$ such that the local inclusion
$$
B_\sigma(0,0)\subset \mathcal{A}_a(0)\quad\text{for all } a\in(0,a_0]
$$
holds. We now pass from this local information to a global analysis. When the potential $W$ has a unique global minimum and satisfies a monotonicity condition, the basin of attraction coincides with the whole phase space $\mathcal{A}_a(0) = \mathbb{R}^{2N}$.

\begin{lemma}\label{L6}
	Suppose \hyperlink{W2}{$(W_2)$} and \hyperlink{W3}{$(W_3)$} hold for $\delta =\lambda=+\infty$. The equilibrium $(0,0)$ is globally asymptotically stable, that is, $\mathcal{A}_a(0)=\mathbb{R}^{2N}$. More precisely, for each initial condition $z_0$ and $a_0>0$ there exists a radius $R>0$ such that
	$$
	\varphi_a(t;z_0)\in B_R(0,0) \text{ for all }\, t\ge0  \text{ and } a\in (0,a_0],  \text{ with } \lim_{t\to+\infty}\varphi_a(t;z_0)=(0,0).
	$$
\end{lemma}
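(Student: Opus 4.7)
The plan is to exploit the hypothesis $\delta=\lambda=+\infty$, which makes the Lyapunov functional $V_a$ of \eqref{Va} strict on the entire phase space $\mathbb{R}^{2N}$, and then combine the coercivity of $W$ with a uniform bound on $V_a(z_0)$ in order to trap every trajectory inside a ball $B_R(0,0)$ whose radius is independent of $a\in(0,a_0]$. Once trajectories are confined to a compact set, convergence to the equilibrium follows by repeating, for each fixed $a$, the argument of Claim 2 in the proof of Lemma \ref{L5}.

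First I would observe that with $\delta=\lambda=+\infty$ the strict inequalities in \hyperlink{W2}{$(W_2)$} and \hyperlink{W3}{$(W_3)$} hold on the entire space, so the proofs of Lemmas \ref{L2} and \ref{L4} carry over verbatim with $B_\eta(0)\times\mathbb{R}^N$ replaced by $\mathbb{R}^{2N}$. In particular, $V_a(x,y)>0$ for $(x,y)\neq(0,0)$, and the map $t\mapsto V_a(\varphi_a(t;z_0))$ is non-increasing along every trajectory.

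Next I would derive the uniform boundedness. Using the elementary inequality $\tfrac{1}{2}\|y_0+ax_0\|^2\le\|y_0\|^2+a^2\|x_0\|^2$, valid for every $a\in(0,a_0]$, one obtains the $a$-independent estimate
$$
V_a(z_0)\le\tfrac{3}{2}\|y_0\|^2+a_0^2\|x_0\|^2+2W(x_0)=:M,\qquad M=M(z_0,a_0).
$$
Since $V_a(\varphi_a(t;z_0))\le V_a(z_0)\le M$ for every $t\ge 0$, the trivial inequalities $V_a(x,y)\ge\tfrac{1}{2}\|y\|^2$ and $V_a(x,y)\ge 2W(x)$ yield
$$
\|y_a(t)\|\le\sqrt{2M},\qquad W(x_a(t))\le M/2.
$$
Coercivity \hyperlink{W4}{$(W_4)$} then forces $\|x_a(t)\|\le K$ for some constant $K=K(M)$, so that choosing $R:=\sqrt{K^2+2M}$ gives $\varphi_a(t;z_0)\in B_R(0,0)$ uniformly in $t\ge 0$ and $a\in(0,a_0]$.

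Finally, for the convergence $\lim_{t\to+\infty}\varphi_a(t;z_0)=(0,0)$, I would fix $a\in(0,a_0]$ and mirror Claim 2 of Lemma \ref{L5}, now with the compact annular region $\overline{B_R(0,0)}\setminus B_r(0,0)$ in place of $\overline{B_\epsilon(0,0)}\setminus B_r(0,0)$. The function $h(t):=V_a(\varphi_a(t;z_0))$ is monotonically decreasing and bounded below, hence admits a limit $d\ge 0$; assuming $d>0$ contradicts the continuous strict negativity of $\langle\nabla V_a,F_a\rangle_*$ on the annulus, which is bounded away from zero there by compactness. A Bolzano--Weierstrass argument, combined with the global positivity of $V_a$ on $\mathbb{R}^{2N}\setminus\{(0,0)\}$, identifies $(0,0)$ as the only possible accumulation point. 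The main point requiring care is maintaining uniformity of $R$ in $a\in(0,a_0]$; this is precisely what the $a$-free bound on $V_a(z_0)$ above is designed to secure, and it is the one place where the proof genuinely uses the full interval $(0,a_0]$ rather than a single fixed $a$.
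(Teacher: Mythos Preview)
Your proposal is correct and follows essentially the same approach as the paper's proof: the same uniform upper bound $V_a(z_0)\le \tfrac{3}{2}\|y_0\|^2+a_0^2\|x_0\|^2+2W(x_0)$, the same use of the lower bound $V_a(x,y)\ge \tfrac12\|y\|^2+2W(x)$ combined with coercivity to obtain confinement in a ball $B_R(0,0)$ independent of $a$, and the same reference to the Claim~2 argument of Lemma~\ref{L5} for the convergence. The only cosmetic difference is that you compute $R$ explicitly from separate bounds on $\|x_a(t)\|$ and $\|y_a(t)\|$, whereas the paper argues directly that $V_a$ is coercive and picks $R$ so that $V_a>L$ outside $B_R(0,0)$.
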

\begin{proof}
First observe that imposing \hyperlink{W2}{$(W_2)$} and \hyperlink{W3}{$(W_3)$} with $\delta=\lambda=+\infty$ upgrades those local estimates to global ones. Therefore $V_a$ is a strict Lyapunov functional at the origin for \eqref{S} defined on $\mathbb{R}^{2N}$. Let $\varphi_a(t;z_0)$ be the solution of \eqref{SF} with initial data $z_0=(x_0,y_0)$ at $t=0$. Now, we note that there exists $L>0$ such that 
\begin{equation}\label{11eq}
	V_a(z_0)\le L\quad\text{for all }\, a\in (0,a_0).
\end{equation}
Indeed, first we remember that	
\begin{equation}\label{5eq}
	V_a(x,y)= \|y\|^2 + 2W(x) + a\langle x,y\rangle + \frac{a^2}{2}\|x\|^2,\quad \forall \, (x,y) \in \mathbb{R}^{2N},
\end{equation} 
Moreover, since
$$
\Big\| \frac{1}{\sqrt{2}}y - \frac{a\sqrt{2}}{2}x \Big\|^2\ge 0,
$$ 
expanding the square we produce the inequality 
\begin{equation}\label{7eq}
	a\langle x,y\rangle \le \frac{1}{2}\|y\|^2 + \frac{a^2}{2}\|x\|^2,\quad \forall \, (x,y) \in \mathbb{R}^{2N}.
\end{equation}
Thereby, combining the estimate \eqref{7eq} with \eqref{5eq} we obtain 
$$
V_a(z_0) \le \frac32\|y_0\|^2+2W(x_0)+a_0^2\|x_0\|^2:=L,\quad\forall\,  a\in(0,a_0],
$$
where $z_0=(x_0,y_0)$. On the other hand, 
$$
V_a(x,y) \ge \dfrac12\|y\|^2+2W(x),\quad \forall\,(x,y)\in\mathbb{R}^{2N}.
$$
Hence, using the coercivity of $W$ in \hyperlink{W4}{$(W_4)$}, we obtain
$$
V_a(x,y)\to +\infty\quad\text{as}\quad \|(x,y)\|_*\to +\infty .
$$
Consequently, there exists $R=R(L)>0$ such that $V_a(x,y)>L$ for all $(x,y)\notin B_R(0,0)$. By the monotonicity established earlier in Lemma \ref{L4} the function $t\mapsto V_a\big(\varphi_a(t;z_0)\big)$ is non-increasing, and therefore, from \eqref{11eq},
$$
V_a\big(\varphi_a(t;z_0)\big)\le V_a(z_0)\le L \quad \text{for all }\,  t\ge0\, \text{ and }\, a\in (0,a_0],
$$
and the trajectory remains in the ball $B_R(0)$ for all $t\ge0$ uniformly in $a\in (0,a_0]$, that is,
$$
\varphi_a(t;z_0)\in B_R(0,0)\,  \text{ for all }\,  t\ge0\, \text{ and }\, a\in (0,a_0].
$$
To conclude that $\varphi_a(t;z_0)\to(0,0)$ as $t\to+\infty$, it suffices to show that the origin is the only accumulation point of the trajectory. This follows exactly as in the second part of Lemma \ref{L5}. Finally, since $z_0$ is arbitrary, we conclude that $\mathcal{A}_a(0)=\mathbb{R}^{2N}$.
\end{proof}

We would like to highlight that the result above shows that the estimate is uniform with respect to the damping parameter $a\in(0,a_0]$: for each initial condition $z_0$, all trajectories $\varphi_a(t;z_0)$ remain confined in a ball of radius $R$ independent of $a$, and converge to the equilibrium. Moreover, by Lemma \ref{L5}, any solution $\varphi_a(t;z_0)$ of \eqref{S} with initial data $\varphi_a(0;z_0)=z_0$ sufficiently close to the equilibrium $(0,0)$ converges to $(0,0)$ as $t\to+\infty$. Requiring global estimates for the potential $W$, we saw in Lemma \ref{L6} that any solution $\varphi_a(t;z_0)$ of \eqref{S} converges to $(0,0)$, regardless of where the initial data $z_0$ are located. Under assumption \hyperlink{W5}{$(W_5)$} the convergence is actually exponential. Informally, \hyperlink{W5}{$(W_5)$} provides a quadratic control of the potential near the equilibrium, making $V_a$ equivalent to $\|x\|^2+\|y\|^2$ and yielding a dissipative inequality. We make this precise in the next lemma.

\begin{lemma}\label{L7}
	Assume condition \hyperlink{W5}{$(W_5)$}. Let $\varphi_a(t;z_0)=(x_a(t),y_a(t))$ be the solution of system \eqref{S} with damping parameter $a>0$. Then there exists $\sigma>0$ such that, if $\|z_0\|_*<\sigma$, the following holds:
	\begin{enumerate}
		\item[\emph{(i)}] For each fixed $a>0$, there exist constants $\gamma(a)>0$ and $C(a)>0$ such that
		$$
		\|x_a(t)\|^2+\|y_a(t)\|^2 \le C(a) e^{-\gamma(a)\,t},\quad \forall\, t\ge0.
		$$
		\item[\emph{(ii)}] If the damping parameter varies in a compact interval 
		$[a_1,a_2]$ with $0<a_1<a_2<+\infty$, then there exist $\gamma_*>0$ and $C_*>0$, depending only on $a_1,a_2$, such that the decay estimate becomes uniform
		$$
		\|x_a(t)\|^2+\|y_a(t)\|^2 \le  C_* e^{-\gamma_*\, t},\quad \forall\, t\ge0, \;\forall\, a\in[a_1,a_2].
		$$
	\end{enumerate}
\end{lemma}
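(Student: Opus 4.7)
The plan is to combine the quadratic sandwich provided by \hyperlink{W5}{$(W_5)$} with the explicit dissipation identity from Lemma \ref{L2} in order to convert the mere monotonicity of $V_a$ into genuine exponential decay. As a preliminary step, I would invoke Lemma \ref{L5} with some $\varepsilon\in(0,\lambda)$ to select $\sigma>0$ with the property that every initial datum $\|z_0\|_*<\sigma$ produces a trajectory trapped in $B_\varepsilon(0,0)\subset B_\lambda(0)\times\mathbb{R}^N$ for all $t\ge 0$ and all $a\in(0,a_0]$. This confinement is what allows the quadratic estimates in \hyperlink{W5}{$(W_5)$} to be used along the entire forward orbit.

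Starting from the expanded form
$$
V_a(x,y)=\|y\|^2+2W(x)+a\langle x,y\rangle+\tfrac{a^2}{2}\|x\|^2,
$$
and combining \hyperlink{W5}{$(W_5)$} with Young's inequality $a\langle x,y\rangle\le\tfrac12\|y\|^2+\tfrac{a^2}{2}\|x\|^2$, I would establish a two-sided sandwich
$$
c_1\bigl(\|x\|^2+\|y\|^2\bigr)\le V_a(x,y)\le C_1(a)\bigl(\|x\|^2+\|y\|^2\bigr),\quad x\in B_\lambda(0),\ y\in\mathbb{R}^N,
$$
with $c_1:=\min\{\tfrac12,2\alpha\}$ independent of $a$ and $C_1(a):=\tfrac32+2\beta+a^2$. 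Lemma \ref{L2} together with the quantitative bound $\langle\nabla W(x),x\rangle\ge \mu\|x\|^2$ from \hyperlink{W5}{$(W_5)$} then yields the dissipation inequality
$$
\langle\nabla V_a(x,y),F_a(x,y)\rangle_*\le -a\min\{1,\mu\}\bigl(\|x\|^2+\|y\|^2\bigr).
$$

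Chaining these estimates along $h(t):=V_a(\varphi_a(t;z_0))$ gives $\dot h(t)\le-\gamma(a)\,h(t)$ with $\gamma(a):=a\min\{1,\mu\}/C_1(a)$. Gronwall's inequality then produces $h(t)\le h(0)e^{-\gamma(a)t}$, and the lower sandwich converts this into the required bound on $\|x_a(t)\|^2+\|y_a(t)\|^2$, proving part (i) with $C(a):=C_1(a)\|z_0\|_*^2/c_1$. For part (ii) I would simply observe that on $[a_1,a_2]$ the constant $C_1(a)$ is bounded above by $C_1^*:=\tfrac32+2\beta+a_2^2$ while $a\min\{1,\mu\}$ is bounded below by $a_1\min\{1,\mu\}$, so that $\gamma(a)\ge\gamma_*:=a_1\min\{1,\mu\}/C_1^*$ and $C(a)\le C_*:=C_1^*\sigma^2/c_1$ furnish the desired uniform decay.

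The main subtlety I expect lies precisely in the dependence of the rate on $a$: since $\gamma(a)=\mathcal{O}(a)$ as $a\to 0^+$, the exponential estimate necessarily degenerates in the conservative limit, which is why the strict positivity of $a_1$ cannot be dropped in (ii), in perfect consistency with the introductory remark that $\gamma(a)\to 0$ as $a\to 0^+$. Beyond this, the argument amounts to bookkeeping with explicit sandwich constants; the one place requiring care is checking that Young's inequality indeed yields a lower bound with an $a$-independent coefficient $c_1$, which is what allows the ratio defining $\gamma(a)$ to be controlled cleanly and, upon restricting to a compact range of damping, uniformly.
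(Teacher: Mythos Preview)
Your proposal is correct and follows essentially the same route as the paper: confinement via Lemma \ref{L5}, a two-sided quadratic sandwich for $V_a$ using \hyperlink{W5}{$(W_5)$} and Young's inequality, the dissipation identity from Lemma \ref{L2} sharpened by $\langle\nabla W(x),x\rangle\ge\mu\|x\|^2$, and Gronwall. The only cosmetic differences are that the paper takes the slightly sharper upper constant $m_2(a)=\max\{\tfrac32,\,2\beta+a^2\}$ in place of your $C_1(a)=\tfrac32+2\beta+a^2$, and sets $C(a)=V_a(z_0)/m_1$ rather than passing through the upper sandwich at $t=0$; neither affects the argument.
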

\begin{proof}
Initially, we note that the following occurs
\begin{equation}\label{6eq}
	a\langle x,y\rangle \ge-\frac12\|y\|^2 - \frac{a^2}{2}\|x\|^2,\quad  \forall\,(x,y)\in\mathbb{R}^{2N}.
\end{equation}
In fact, just see that
$$
\Big\| \frac{1}{\sqrt{2}}y + \frac{a\sqrt{2}}{2}x \Big\|^2\ge 0.
$$ 
Therefore, combining the estimates \eqref{7eq} and \eqref{6eq} with \eqref{5eq} and \hyperlink{W5}{$(W_5)$} we get
\begin{equation}\label{8eq}
	m_1\big(\|x\|^2+\|y\|^2\big)\ \le\ V_a(x,y)\ \le\ m_2(a)\big(\|x\|^2+\|y\|^2\big),\quad \forall \, (x,y) \in B_\eta(0)\times \mathbb{R}^N,
\end{equation}
where 
$$
m_1=\min\Big\{\frac{1}{2},2\alpha\Big\}\quad\text{and}\quad m_2(a)=\max\Big\{\frac{3}{2},2\beta+a^2\Big\},
$$
where $\alpha$ and $\beta$ were given in \hyperlink{W5}{$(W_5)$}. Calculating the derivative of $V_a$ along of $\varphi_a$ we deduce 
\begin{equation}\label{8eqq}
	\frac{d}{dt}V_a(\varphi_a(t;z_0))=  -a\|y_a(t)\|^2 - a\langle \nabla W(x_a(t)),x_a(t)\rangle.
\end{equation}
On the other hand, given $\varepsilon=\eta$, we have by Lemma \ref{L5} that there is $\sigma>0$ such that if $\|z_0\|_*<\sigma$, then 
$$
\|\varphi_a(t;z_0)\|_* < \eta\quad\text{for all }\, t \geq 0.
$$ 
With this, we can apply \hyperlink{W1}{$(W_5)$} to inequality \eqref{8eqq} to obtain
$$
\frac{d}{dt}V_a(\varphi_a(t;z_0))\le -a\Big(\|y_a(t)\|^2+\mu\|x_a(t)\|^2\Big), \,\,  \forall \, t\ge 0,
$$
where $\mu>0$ was given in \hyperlink{W5}{$(W_5)$}. Hence, 
\begin{equation}\label{9eq}
	\frac{d}{dt}V_a(\varphi_a(t;z_0))\le -a\min\{1,\mu\}\big(\|x_a(t)\|^2+\|y_a(t)\|^2\big),\,\,  \forall \, t\ge 0.
\end{equation}
By \eqref{8eq} and \eqref{9eq}, 
$$
\frac{d}{dt}V_a(\varphi_a(t;z_0)) \le  -\gamma(a)\, V_a(\varphi_a(t;z_0)),\quad\text{for all }\, t\ge 0,
$$
where 
$$
\gamma(a):=\frac{a\min\{1,\mu\}}{m_2(a)}. 
$$
Now we consider the function $g(t):=e^{\gamma(a) t}V_a(\varphi_a(t;z_0))$ for $t\ge 0$. So, 
$$
\dot{g}(t)=e^{\gamma(a) t}\left(\frac{d}{dt}V_a(\varphi_a(t;z_0))+\gamma(a) V_a(\varphi_a(t;z_0))\right)\le 0.
$$
Therefore $g$ is non-increasing on $[0,+\infty)$, and thus, 
\begin{equation}\label{10eq}
	V_a(\varphi_a(t;z_0))\le V_a(z_0)e^{-\gamma(a) t}.
\end{equation}
From \eqref{10eq} and using the lower bound in \eqref{8eq}, one has
$$
\|x_a(t)\|^2+\|y_a(t)\|^2 \le \frac{V_a(z_0)}{m_1}e^{-\gamma(a) t}\, \text{ for all }t\ge 0,
$$ 
where $C(a)=\frac{V_a(z_0)}{m_1}$. This proves item (i). To prove the uniform estimate (ii), assume $a\in[a_1,a_2]$ with 
$0<a_1<a_2$. Then $m_2(a)\le m_2(a_2),$ and hence
$$
\gamma(a)= \frac{a\min\{1,\mu\}}{m_2(a)}\ge \frac{a_1\min\{1,\mu\}}{m_2(a_2)}=: \gamma_* > 0.
$$In addition, by \eqref{11eq} there exists a constant $L>0$, independent of $a\in(0,a_2]$, such that $V_a(z_0)\le L$ for all such $a$. Combining these facts, we obtain the uniform exponential bound with $C_* = L/m_1$. This concludes the proof of the lemma.
\end{proof}

It is worth noting that the exponential decay obtained in Lemma \ref{L7} cannot be uniformized when $a\to 0^{+}$. In fact, $\gamma(a)\to 0$ as $a\to 0^{+}$, and this degeneracy reflects an intrinsic dynamic characteristic of the system. When $a$ is small, the system \eqref{S} approaches the conservative equation $\ddot{u}+\nabla W(u)=0$, which has no decay mechanism (See Section \ref{Sec3}). This phenomenon also has a geometric interpretation: for each fixed $a>0$, the Lyapunov function $V_a$ defines a family of ellipsoidal level sets that control the Euclidean norm of $(x,y)$, but these sets become distorted as $a$ varies and are not uniformly comparable to each other.

We have finished this section by providing proof of our main result.

\begin{proof}[{\bf Proof of Theorem \ref{Teo1}}]
The proof follows by combining the results established in this section. Local stability and convergence in parts (a)-(b) follow from Lemma \ref{L5}, while the global estimates in (c)-(d) are obtained in Lemma \ref{L6}. Finally, the exponential decay stated in the last part of the theorem is a direct consequence of the estimates derived in Lemma \ref{L7}. This completes the proof.
\end{proof}

\section{The Conservative Case $a=0$}\label{Sec3}

In this section we discuss the dynamics of the conservative system
\begin{equation}\label{EqConservative}
	\ddot{u}(t) + \nabla W(u(t)) = 0,
\end{equation}
which arises as the limit of \eqref{Equation} when the damping parameter $a$ tends to zero. Throughout this section, we continue to assume that $W$ satisfies \hyperlink{W1}{$(W_1)$}-\hyperlink{W4}{$(W_4)$} and, without loss of generality, that the global minimum of $W$ is located at $u_*=0$. In contrast with the dissipative
case, the conservative dynamics do not generate asymptotic stability, since no decay mechanism is present. To analyze the qualitative behavior of \eqref{EqConservative}, it is convenient to rewrite it as a first-order system in $\mathbb{R}^{2N}$ by setting $x=u$ and $y=\dot{u}$:
\begin{equation}\label{S0}
	\begin{cases}
		\dot{x} = y,\\[2mm]
		\dot{y} = -\nabla W(x).
	\end{cases}
\end{equation}
The natural energy associated with \eqref{EqConservative},
$$
E(x,y) := \frac12\|y\|^2 + W(x), \quad (x,y)\in B_\eta(0)\times \mathbb{R}^N,
$$
where $\eta=\min\{\delta,\lambda\}$ with $\delta$ and $\lambda$ given in \hyperlink{W2}{$(W_2)$}-\hyperlink{W3}{$(W_3)$}, acts as a Lyapunov function for the equilibrium $(0,0)$. Indeed, if $(x(t),y(t))$ is a solution of \eqref{S0}, then deriving 
$E(x(t),y(t))$ along the trajectory yields
$$
\frac{d}{dt}E(x(t),y(t))= \langle y(t),\dot{y}(t)\rangle + \langle \nabla W(x(t)),\dot{x}(t)\rangle.
$$
Using $\dot{x}=y$ and $\dot{y}=-\nabla W(x)$, we immediately obtain
$$
\frac{d}{dt}E(x(t),y(t))= -\langle y(t),\nabla W(x(t))\rangle+ \langle \nabla W(x(t)),y(t)\rangle= 0.
$$
Thus, if $u(t)$ is any solution of \eqref{EqConservative}, then the energy is conserved in time, that is,
\begin{equation}\label{EnergyConservation}
	\frac12\|\dot{u}(t)\|^2 + W(u(t)) = \tau,\quad \forall\, t\in\mathbb{R},
\end{equation}
for some $\tau\ge 0$ determined by the initial condition. From the Lyapunov Criterion, conservation of energy implies that 
$(0,0)$ is stable: for every $\varepsilon>0$ there exists 
$\delta>0$ such that
$$
\|u(0)\| + \|\dot{u}(0)\| < \delta\quad\Longrightarrow\quad\sup_{t\ge 0} \big( \|u(t)\| + \|\dot{u}(t)\| \big) \le \varepsilon.
$$
However, stability cannot be asymptotic. Indeed, if a nontrivial solution $(u(t),\dot{u}(t))$ were to converge to $(0,0)$ as $t\to+\infty$, then \eqref{EnergyConservation} would force $\tau=0$, which implies that $u(t)\equiv 0$ identically. Hence the only trajectory converging to the equilibrium is the constant solution.

The conservation law \eqref{EnergyConservation} also clarifies the qualitative nature of conservative trajectories. For $\tau>W(0)=0$, every solution of \eqref{EqConservative} lies on the energy level surface
$$
\mathcal{F}_\tau:= \Big\{(x,y)\in B_\eta(0)\times \mathbb{R}^N : \|y\|^2 + 2W(x) = 2\tau\Big\}.
$$
Since $W$ is coercive by assumption \hyperlink{W4}{$(W_4)$}, every sublevel set $\{x\in B_\eta(0) : W(x)\le \tau\}$ is compact. Hence, for any fixed $\tau>0$, the identity \eqref{EnergyConservation} implies that
$$
u(t)\in \{x\in B_\eta(0) : W(x)\le\tau\},\quad \forall\, t\in\mathbb{R},
$$
and therefore $u(t)$ is globally bounded. Furthermore, the velocity also satisfies
$$
\|\dot{u}(t)\|^2 = 2(\tau - W(u(t))) \le 2\tau,
$$
which shows that $\dot{u}(t)$ is also uniformly bounded for all $t\in\mathbb{R}$. Consequently, every solution of the conservative system is globally bounded in phase space:
$$
\sup_{t\in\mathbb{R}}\Big( \|u(t)\| + \|\dot{u}(t)\| \Big)<+\infty.
$$
Therefore, the conservative trajectories generated by \eqref{EqConservative} remain confined to a compact subset of $\mathbb{R}^{2N}$ and cannot satisfy $\|u(t)\|\to+\infty$ as $t\to\pm \infty$. Instead, qualitative behavior is typically oscillatory or transitional.

\subsection{A singular limit as $a\to 0^{+}$}

We now turn to the relation between the conservative regime and the dissipative dynamics studied in Section \ref{Sec2}. Assume that $W$ satisfies \hyperlink{W1}{$(W_1)$}-\hyperlink{W5}{$(W_5)$}. The conservative system \eqref{EqConservative} arises as the limit of the dissipative equation \eqref{Equation} when the damping parameter $a$ tends to zero. In the dissipative regime, Section \ref{Sec2} established that solutions are asymptotically stable and even converge exponentially fast to the equilibrium with decay rate $\gamma(a)>0$ depending on $a$. A natural question is whether this behavior persists, in some sense, in the limit as $a\to 0^{+}$. The answer is negative: the limit dynamics is not asymptotically stable, and the passage $a\to 0^{+}$ is singular. This phenomenon can be described rigorously through a perturbation argument. Let $(a_n)$ be a sequence of positive numbers with $a_n \to 0$, and let $u_n$ be the solution of the perturbed problem
\begin{equation}\label{EqDamped-n}
	\ddot u_n(t) + a_n \dot u_n(t) + \nabla W(u_n(t)) = 0,
\end{equation}
with fixed initial data $(u_n(0),\dot u_n(0)) = (z_0,z_1)$, where $(z_0,z_1)\neq(0,0)$ and $\|(z_0,z_1)\|<\sigma$. Here $\sigma>0$ depends on a prescribed $\varepsilon\in(0,\delta)$, with $\delta>0$ given in \hyperlink{W2}{\((W_2)\)}, and is provided by the uniform stability result of Lemma \ref{L5}. In particular, Lemma \ref{L5} ensures that
$$
\lim_{t\to+\infty} (u_n(t),\dot u_n(t)) = (0,0)\quad\text{and}\quad\|u_n(t)\| + \|\dot u_n(t)\| \le \varepsilon \quad\text{for all } t\ge 0\text{ and }n\in\mathbb{N}.
$$
Moreover, by Lemma \ref{L7}, each $u_n$ satisfies the exponential decay estimate
\begin{equation}\label{Exp}
	\|u_n(t)\| + \|\dot u_n(t)\|\le C e^{-\gamma(a_n) t}, \qquad t\ge 0,
\end{equation}
where $C>0$ is a constant independent of $a_n$. A compactness argument shows that, up to a subsequence, $u_n$ converges in $C^2_{\mathrm{loc}}([0,+\infty))$ to a limit
function $u_0$, which solves the conservative system
$$
\ddot u_0 + \nabla W(u_0)=0
$$
with initial data $u_0(0)=z_0$ and $\dot u_0(0)=z_1$, and satisfying
\begin{equation*}
	\sup_{t\ge 0}\left(\|u_0(t)\|+\|\dot{u}_0(t)\|\right) \leq \epsilon.
\end{equation*}
Although each $u_n$ converges asymptotically to the equilibrium, the limit solution $u_0$ does not. Indeed, suppose by contradiction that
$$
\lim_{t\to+\infty} u_0(t)=0\quad\text{and}\quad
\lim_{t\to+\infty} \dot u_0(t)=0.
$$
Since $W$ is continuous and $W(0)=0$, it follows that $W(u_0(t))\to 0$ and $\|\dot u_0(t)\|\to 0$. Inserting these limits into the conservation law \eqref{EnergyConservation}, we obtain $\tau=0$. But $\tau=0$ implies
$$
\dot u_0(t)\equiv 0\quad\text{and}\quad W(u_0(t))\equiv 0,
$$
so that $u_0(t)= 0$ for all $t\in\mathbb{R}$. This contradicts the fact that $(u_0(0),\dot u_0(0))\neq(0,0)$. Therefore,
$$
\lim_{t \to +\infty} (u_0(t),\dot{u}_0(t)) \neq (0,0).
$$
This argument shows that although each damped solution converges to the equilibrium, the limit solution does not. Equivalently, the exponential decay rate $\gamma(a)$ obtained for $a>0$ must necessarily satisfy $\gamma(a)\to 0$ as $a\to 0^+$, and no uniform decay estimate can hold on an interval of the form $(0,a_0]$. The transition from $a>0$ to $a=0$ represents a loss of asymptotic stability and a qualitative change in the global dynamics.


\section{Existence of Attractors}\label{Sec4}

In this section, we investigate the existence of a global attractor for the semigroup generated by system \eqref{S}. Our aim is to obtain a global qualitative description of the asymptotic dynamics of the system, complementing the pointwise Lyapunov analysis of Section \ref{Sec2}. We recall that, by Lemma \ref{L1}, for every $a>0$ and every initial condition $z_0 = (x_0,y_0)\in\mathbb{R}^{2N}$, the Cauchy problem \eqref{SF} associated with the dissipative system \eqref{Equation} admits a unique global solution
$$
\varphi_a(t;z_0) = (x_a(t),y_a(t)) \in \mathbb{R}^{2N}, \quad t\ge 0.
$$
A nonlinear semigroup on $\mathbb{R}^{2N}$ is a family of mappings $\{T(t)\}_{t\ge 0}$ given by $T(t):\mathbb{R}^{2N}\to\mathbb{R}^{2N}$, such that
\begin{itemize}
	\item[(a)] $T(0):\mathbb{R}^{2N}\mapsto\mathbb{R}^{2N}$ is such that $T(0)z=z$;
	\item[(b)] $T(t+s)=T(t)\circ T(s)$ for all $t,s\ge 0$;
	\item[(c)] $(t,z_0)\mapsto T(t)z_0$ is continuous from $[0,\infty)\times \mathbb{R}^{2N}$ into $\mathbb{R}^{2N}$. 
\end{itemize}
Using the well-defined \eqref{SF}, we define the semigroup naturally associated with the system \eqref{S} by
\begin{equation}\label{semigroup-definition}
	T(t) z_0 \coloneqq \varphi_a(t;z_0), \quad t\ge 0.
\end{equation}
This semigroup encodes the full dynamical evolution of solutions of  \eqref{S} under the flow generated by the parameter $a>0$. For further background on semigroup theory and its applications to dynamical systems, we refer the reader to \cite{Livro-Alexandre} and \cite{Hale}.

For our purposes, following the definition in \eqref{Energy}, the energy of the solution $\varphi_a(t;z_0) = (x_a(t),y_a(t))$ is given by
$$
E(\varphi_a(t;z_0)) \coloneqq \frac12 \|y_a(t)\|^2 + W(x_a(t)),\quad t\ge 0.
$$
As shown in Lemma~\ref{L1}, the energy is non-increasing along trajectories, since $\dfrac{d}{dt} E(\varphi_a(t;z_0)) \le 0$ for all $t\ge 0$. Therefore,
\begin{equation}\label{decreasing-property}
	E(\varphi_a(t;z_0))\le E(z_0),\quad \forall\, t\ge 0.
\end{equation}
As emphasized earlier in Remark \ref{remark-translation}, throughout this section we continue to assume that the global minimizer $u_*$ of $W$ in is located at the origin, i.e., $u_*=0$, and that the potential $W$ satisfies assumptions \hyperlink{W1}{$(W_1)$}-\hyperlink{W4}{$(W_4)$}.

\begin{lemma}\label{asd}
	Let $B\subset \mathbb R^{2N}$ be any bounded set containing the initial data $z_0$.
	Then there exists a bounded set $K\subset \mathbb{R}^{2N}$ such that 
	$$
	T(t) B \subset K,\quad \forall\, t\ge 0.
	$$
	In other words, $K$ serves as a absorbing set for the family of semigroup $\{T(t)\}_{t\ge 0}$.
\end{lemma}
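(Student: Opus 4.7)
The plan is to exploit the non-increasing character of the mechanical energy $E(x,y)=\tfrac{1}{2}\|y\|^2+W(x)$ already established along trajectories of \eqref{S} in the proof of Lemma \ref{L1}, together with the coercivity hypothesis \hyperlink{W4}{$(W_4)$}, in order to trap every forward orbit starting in $B$ inside a single bounded set.

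First, I would set $M:=\sup_{z_0\in B} E(z_0)$. Since $E$ is continuous on $\mathbb{R}^{2N}$ and $B$ is bounded, $M$ is finite. By the monotonicity recorded in \eqref{decreasing-property}, for every $z_0\in B$ and every $t\ge 0$,
$$
\tfrac{1}{2}\|y_a(t)\|^2 + W(x_a(t)) \;=\; E(T(t)z_0) \;\le\; E(z_0) \;\le\; M.
$$

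Next, I would use coercivity to control the spatial component. By \hyperlink{W4}{$(W_4)$} and continuity of $W$, there exists $R_1>0$ such that $W(x)>M$ whenever $\|x\|>R_1$; the estimate above then forces $W(x_a(t))\le M$, and hence $\|x_a(t)\|\le R_1$ uniformly in $t\ge 0$ and $z_0\in B$. For the velocity component, I would observe that $W$, being continuous with compact sublevel sets by \hyperlink{W4}{$(W_4)$}, attains a global infimum $c_0:=\inf_{\mathbb{R}^N}W\in\mathbb{R}$, so that
$$
\tfrac{1}{2}\|y_a(t)\|^2 \;\le\; M - W(x_a(t)) \;\le\; M - c_0,
$$
yielding $\|y_a(t)\|\le R_2:=\sqrt{2(M-c_0)}$ uniformly in $t\ge 0$ and $z_0\in B$.

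Combining both bounds, the set $K:=\overline{B_{R_1}(0)}\times\overline{B_{R_2}(0)}\subset\mathbb{R}^{2N}$ is bounded and satisfies $T(t)B\subset K$ for every $t\ge 0$, which concludes the argument. I do not foresee a serious obstacle in this proof, since the two key ingredients (energy dissipation and coercivity) are already in place; the only mild subtlety is the justification of the finiteness of $c_0$, which is immediate from continuity of $W$ together with the compactness of its sublevel sets granted by \hyperlink{W4}{$(W_4)$}. Worth noting is that the radii $R_1,R_2$ depend only on $B$ (through $M$) and not on the damping parameter $a>0$, which will be useful in the subsequent construction of the global attractor.
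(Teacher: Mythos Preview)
Your argument is correct and follows essentially the same route as the paper's own proof: set $M=\sup_{z_0\in B}E(z_0)$, use the energy monotonicity \eqref{decreasing-property} together with the coercivity \hyperlink{W4}{$(W_4)$} to bound $x_a(t)$, and then use the energy inequality again to bound $y_a(t)$. The only cosmetic differences are that the paper bounds $\|y_a(t)\|^2\le 2M$ directly (using that $W\ge 0$, since $u_*=0$ is a global minimizer) whereas you pass through $c_0=\inf W$, and that the paper takes $K$ to be a ball in $\mathbb{R}^{2N}$ rather than a product of balls.
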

\begin{proof}
Fix $a>0$ and let $z_0=(x_0,y_0)\in B$. Since $B$ is bounded in $\mathbb{R}^{2N}$ and $W$ is continuous by \hyperlink{W1}{$(W_1)$}, there exists a $R_B>0$ such that
$$
\sup_{z_0\in B} E(z_0)\le R_B.
$$
Therefore, by the monotonicity property of the energy in \eqref{decreasing-property}, we obtain
$$
W(x_a(t))\leq R_B\quad\text{and}\quad\|y_a(t)\|^2\leq 2R_B\quad\forall \, t\geq 0,
$$
where $x_a$ and $y_a$ denote the components of the trajectory $\varphi_a(t;z_0)$. Next, by the coercivity assumption \hyperlink{W4}{$(W_4)$}, the sublevel set
$$
\mathcal{S}_{B} \coloneqq \{ x \in \mathbb{R}^{N}:W(x)\le R_B\}
$$
is bounded. Consequently, we may define the positive real number
$$
C_B \coloneqq \sup \left\{\|x\|^{2} : x \in \mathcal{S}_{B}\right\} < \infty.
$$
In this way, we have 
$$
\Vert T(t)z_0 \Vert_{\ast}^2 = \Vert (x_a(t), y_a(t))\Vert_{\ast}^2 = \Vert x_a(t) \Vert^2 + \Vert y_a(t) \Vert^2 \leq C_B + 2R_B,\quad \forall\, t\ge 0, 
$$
Now, defining
$$
K\coloneqq\Big\{(x,y)\in\mathbb{R}^{2N}:\|(x,y)\|_{\ast}^{2}\le \sqrt{C_B + 2R_B}\Big\}
$$
it follows the inclusion $T(t)B \subset K$ for all $t\geq 0$, which finishes our proof.
\end{proof}

The inclusion $T(t)B \subset K$ for all $t\ge 0$, means precisely that every trajectory starting in $B$ remains bounded for all time, with a bound that is independent of the parameter $a>0$. Thus, the set $K$ acts as a absorbing set for the family of dissipative systems \eqref{Equation}.

\begin{remark}
	Although the semigroup in \eqref{semigroup-definition} is denoted simply by $\{T(t)\}_{t\ge 0}$, it is implicitly associated with the dissipative system \eqref{Equation} for a given damping parameter $a>0$. A crucial point in the proof of Lemma \ref{asd} is that the absorbing set $K$ can be chosen independently of the parameter $a>0$. In other words, the same bounded set $K\subset\mathbb{R}^{2N}$ absorbs the image of any bounded set $B$ under the semigroup generated by \eqref{Equation}, uniformly with respect to $a>0$. 
\end{remark}

For completeness, we recall the notion of attractor used in this context. A set $\mathcal{A}\subset \mathbb{R}^{2N}$ is called the \emph{global attractor} for the semigroup $\{T(t)\}_{t\ge 0}$ if $\mathcal{A}$ is compact, $\mathcal{A}$ is invariant, that is, $T(t)\mathcal{A}=\mathcal{A}$ for all $t\ge 0$ and $\mathcal{A}$ attracts every bounded subset of $\mathbb{R}^{2N}$, that is, for every bounded $B\subset \mathbb{R}^{2N}$,
$$
\lim_{t\to\infty}\operatorname{dist}(T(t)B,\mathcal{A})=0,
$$
where $\operatorname{dist}(T(t)B,\mathcal{A})$ denotes the distance between sets $T(t)B$ and $\mathcal{A}$. This definition ensures that the global attractor is the minimal compact set that attracts all bounded sets. Next, we will show how the absorbing set $K$ constructed in Lemma \ref{asd} naturally leads to the existence of the global attractor for $\{T(t)\}_{t\ge 0}$.

\begin{proposition}
The semigroup $\{T(t)\}_{t\ge 0}$ defined by \eqref{semigroup-definition} admits a global attractor $\mathcal{A} \subset \mathbb{R}^{2N}$. Moreover, the attractor can be characterized as
$$
\mathcal{A} = \mathcal{U}^u(\mathcal{E})\coloneqq\{z\in \mathbb{R}^{2N}:T(-t)z\to \mathcal{E}\text{ as }t\to +\infty\},
$$
where $\mathcal{E}:=\{(x,0)\in\mathbb{R}^{2N}:\nabla W (x)=0\}$ denotes the set of equilibria of system \eqref{S}. Furthermore, if the potential $W$ admits a unique global minimizer $u_*$, then $\mathcal{A}= \{(u_*,0)\}.$
\end{proposition}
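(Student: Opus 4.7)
The plan is to combine the finite-dimensional specialization of the standard theory of dissipative semigroups with the gradient-like structure already exploited in Section \ref{Sec2}. First I would verify that $\{T(t)\}_{t\ge 0}$ is indeed a nonlinear semigroup on $\mathbb{R}^{2N}$: property (a) is built into the definition, property (b) follows from uniqueness of solutions of \eqref{SF} (guaranteed by the $C^2$-regularity of $W$ via Picard-Lindelöf), and property (c) is the classical continuous dependence of ODE solutions on initial data and time. Then, by Lemma \ref{asd}, the semigroup admits a bounded absorbing set $K\subset\mathbb{R}^{2N}$; since we are in finite dimension, $\overline{K}$ is compact. By the classical theorem on the existence of global attractors (see, e.g., \cite{Hale,Livro-Alexandre}), the existence of a compact absorbing set for a continuous semigroup yields a unique global attractor $\mathcal{A}$, concretely realized as the $\omega$-limit set $\omega(\overline{K})$.

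Next I would establish that $\{T(t)\}_{t\ge 0}$ is a gradient semigroup, with Lyapunov functional $E(x,y)=\tfrac12\|y\|^2+W(x)$. The computation $\frac{d}{dt}E(\varphi_a(t;z_0))=-a\|y_a(t)\|^2\le 0$ from Lemma \ref{L1} shows that $E$ is non-increasing along orbits. Moreover, if $t\mapsto E(\varphi_a(t;z_0))$ is constant on a full orbit, then $y_a(t)\equiv 0$, which forces $\dot x_a=0$ and hence $\nabla W(x_a)=-\dot y_a=0$; thus $\varphi_a(t;z_0)\in\mathcal{E}$ for every $t$. This is precisely the Hale definition of a gradient system. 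The set $\mathcal{E}$ of equilibria is bounded because each fixed point forms a bounded invariant set and is therefore contained in $\omega(\overline{K})\subseteq\mathcal{A}$. Invoking the structural theorem for gradient semigroups (cf.\ \cite{Hale,Livro-Alexandre}), the global attractor coincides with the unstable set of the equilibria, yielding $\mathcal{A}=\mathcal{U}^u(\mathcal{E})$.

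For the final assertion I would argue as follows. Under the additional hypothesis that $u_*$ is the unique critical point of $W$, as is the case under the global version of \hyperlink{W3}{$(W_3)$} invoked in Lemma \ref{L6}, one has $\mathcal{E}=\{(u_*,0)\}$. By Lemma \ref{L6}, every forward trajectory converges to $(u_*,0)$, so the only complete bounded trajectory is the constant one, and consequently $\mathcal{U}^u(\{(u_*,0)\})=\{(u_*,0)\}$. Equivalently, $\mathcal{A}$ is a compact invariant set contained in the basin of attraction of $(u_*,0)$, and invariance together with global convergence forces $\mathcal{A}=\{(u_*,0)\}$.

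The hardest step, in my view, is the rigorous application of the characterization $\mathcal{A}=\mathcal{U}^u(\mathcal{E})$. It requires $\mathcal{E}$ to be bounded and the semigroup to be extendable to a continuous group on the invariant set $\mathcal{A}$. Both facts are standard but easy to gloss over: boundedness of $\mathcal{E}$ follows from the inclusion $\mathcal{E}\subseteq\mathcal{A}$ just noted, while the backward extension on $\mathcal{A}$ rests on compactness of $\mathcal{A}$ and the uniqueness of solutions for \eqref{SF}. A secondary subtlety lies in the final clause: the hypothesis "unique global minimizer" must tacitly be strengthened to "unique critical point", since otherwise additional equilibria (saddles or local minima) together with their unstable manifolds could enlarge the attractor strictly beyond $\{(u_*,0)\}$.
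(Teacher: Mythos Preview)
Your proposal is correct and follows essentially the same architecture as the paper: compact absorbing set (Lemma~\ref{asd}) $\Rightarrow$ global attractor via the classical theorem, then gradient structure $\Rightarrow$ characterization $\mathcal{A}=\mathcal{U}^u(\mathcal{E})$ via Hale's structural theorem. The one substantive difference is that you use the energy $E$ together with the LaSalle-type argument ($E$ constant along an orbit forces $y\equiv 0$, hence $\nabla W(x)=0$), whereas the paper invokes the strict Lyapunov functional $V_a$ of Lemma~\ref{L2} directly; both routes deliver the needed gradient-system hypothesis, and yours has the mild advantage of not requiring the local domain restriction $B_\eta(0)\times\mathbb{R}^N$ on which $V_a$ was defined. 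Your closing remark is sharper than the paper's own argument: the paper asserts that a unique global minimizer forces $\mathcal{E}=\{(u_*,0)\}$, but, as you note, this only follows if ``unique global minimizer'' is tacitly read as ``unique critical point'' (equivalently, the global form of \hyperlink{W3}{$(W_3)$} is in force, as in Lemma~\ref{L6}); your identification of this gap is well taken.
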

\begin{proof}
From \cite[Theorem 1.1]{Temam} or \cite[Corollary 2.13]{Livro-Alexandre}, a necessary and sufficient condition to obtain a global attractor is to prove the existence of a compact absorbing set. Moreover, in this case, the attractor is given by the $\omega$-limit set of such absorbing set. Since we are in a finite-dimensional setting, the set $K$ from Lemma  \ref{asd} fulfill the requirements. Therefore, $\mathcal{A} \coloneqq \omega (K)$ is the global attractor for the semigroup \eqref{semigroup-definition}. Combining Lemma \ref{L2} with Theorem 3.8.5 of \cite{Hale}, we conclude that $\mathcal{A} = \mathcal{U}^u(\mathcal{E})$. Finally, if the potential $W$ admits a unique global minimizer $u_*$, then the equilibrium set reduces to the single point $\mathcal{E}=\{(u_*,0)\}$. In this case, the existence of a strict Lyapunov functional, as established in Lemma \ref{L2}, this implies that no non-trivial invariant set can persist far from equilibrium. Consequently, $\mathcal{A}=\{(u_*,0)\}$.
\end{proof}

Since the potential $W$ is coercive, from \hyperlink{W4}{$(W_4)$}, the set of equilibria $\mathcal{E}$ is bounded in $\mathbb{R}^{2N}$. Consequently, 
$$
\mathcal{E} \subset \mathcal{A}.
$$
In particular, for the Ginzburg-Landau potential
$$
W(u)=\frac{1}{4}\big(\|u\|^2-1\big)^2,
$$
the set of equilibria coincides with the unit sphere, $\mathcal{E}=\{(u,0)\in\mathbb{R}^{2N} : \|u\|=1\}$. In this case, the global attractor $\mathcal{A}$ contains a nontrivial continuum of equilibrium points.


\section{Numerical Simulations}\label{Sec5}

The purpose of this section is to present a numerical study that illustrates the dynamics of the damped system \eqref{Equation}. Our approach exploit the functional programming in R language, supported by the following libraries:

\begin{itemize} 
\item[(i)] \href{https://www.tidyverse.org/}{tidyverse} for data treatment; 
\item[(ii)] \href{https://cran.r-project.org/web/packages/deSolve/index.html}{deSolve} for numerical simulations with differential equations; 
\item[(iii)] \href{https://ggplot2.tidyverse.org/}{ggplot2}   for plotting and graphic visualizations. 
\end{itemize}
We will use this numerical framework to investigate the behavior of solutions associated with different types of potentials. In particular, we will consider the quadratic potential, which yields a damped linear dynamics, and the double-well potential, widely known in the literature as the Ginzburg-Landau potential, whose nonlinearity introduces more complex phenomena such as multiple equilibria and phase transitions. To complement these two classical examples, we also examine an exponential potential, whose superquadratic growth produces a strongly nonlinear dynamic. Throughout the section, the analysis will focus on the one-dimensional setting.


\subsection{Quadratic Potential}

We begin with the simplest potential satisfying conditions \hyperlink{W1}{$(W_1)$}-\hyperlink{W5}{$(W_5)$}, namely
$$
W(u)=\frac{1}{2}u^{2},\quad u\in\mathbb{R}.
$$
In this case, equation \eqref{Equation} reduces to
\begin{equation}\label{QP}
	\ddot{u}(t)+a\dot{u}(t)+u(t)=0,
\end{equation}
the classical damped harmonic oscillator. From a physical viewpoint, this system models small oscillations of a mass attached to a linear spring under viscous damping. Its dynamics are completely determined by the characteristic equation
$$
\lambda^{2}+a\lambda+1=0,
$$
whose roots describe the underdamped ($a<2$), critically damped ($a=2$), and overdamped regimes ($a>2$). Although the transient behavior varies substantially across these regimes, the presence of damping ensures that, in all cases, the solution converges to the equilibrium $u=0$ as $t\to +\infty$. We rewrite this second-order ODE \eqref{QP} as a first-order system by setting $y_1 = u$ and $y_2 = \dot{u},$ which yields  
$$
\begin{cases}
	\dot{y}_1 = y_2, \\
	\dot{y}_2 = -a y_2 - y_1.
\end{cases}
$$
The system above is then integrated using methods from the \textsf{deSolve} package in \textsf{R} language. 

\medskip
\begin{center}
\includegraphics[width=\textwidth]{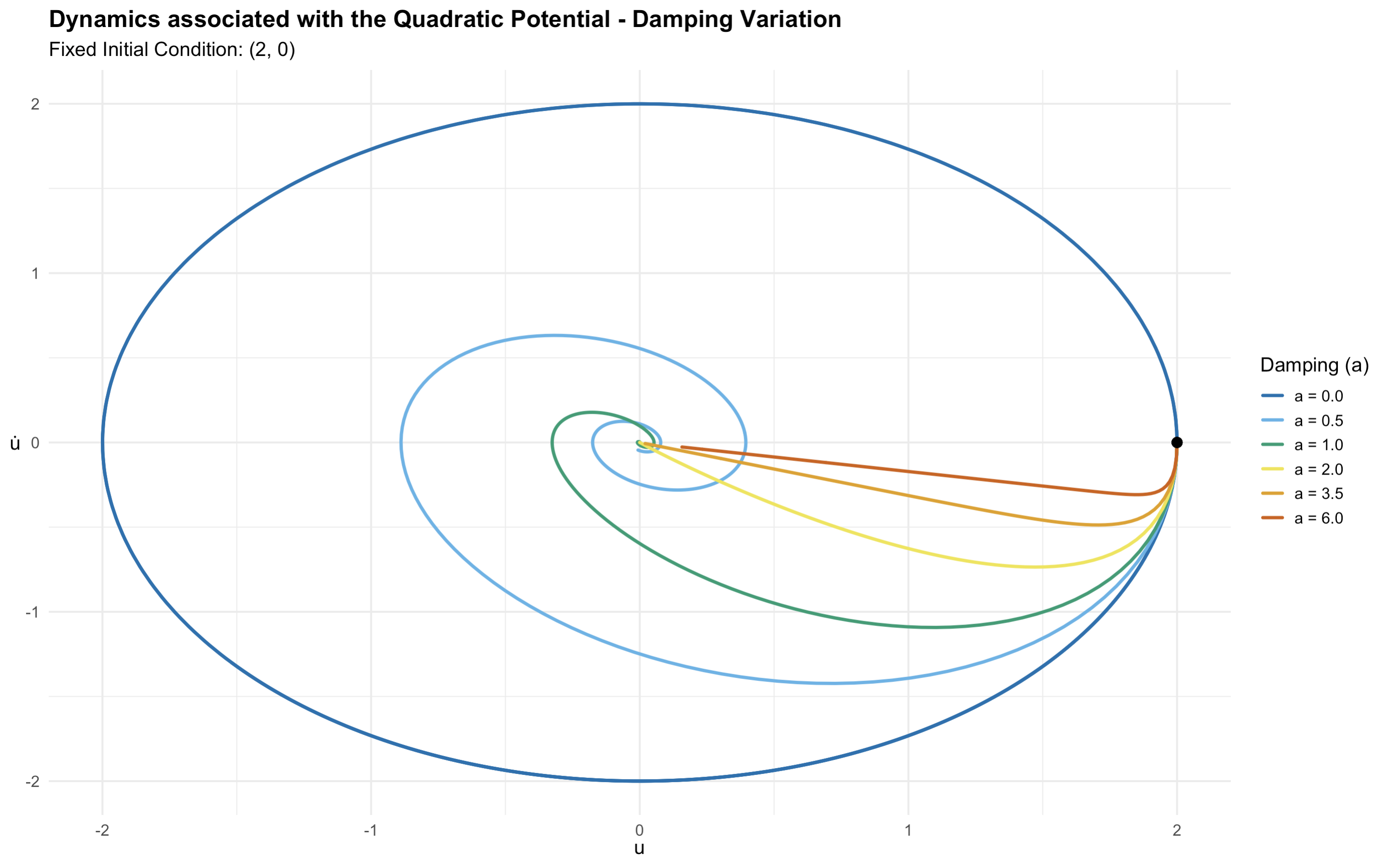}
\end{center}

The first graph illustrates the sensitivity of the solution with respect to the damping parameter $a$ 
for the quadratic potential $W(u) = u^2/2$. 
We fix the initial condition at $(2, 0)$ and vary $a$ across six different values 
covering the distinct dynamical regimes of the damped harmonic oscillator. 

\begin{itemize}
\item[(1)] Conservative Limit ($a=0$): The trajectory forms a closed orbit (an ellipse), representing 
periodic motion with constant energy. 
	
\item[(2)] Underdamped Regime ($0 < a < 2$): For $a=0.5$ and $a=1.0$, 
the trajectories spiral towards the origin, 
causing the solution to oscillate multiple times before decaying. 
	
\item[(3)] Critically Damped Regime ($a=2$): The trajectory goes to the equilibrium 
as quickly as possible. 
	
\item[(4)] Overdamped Regime ($a > 2$): For $a=3.5$ and $a=6.0$, 
the trajectories approach the origin monotonically. 
\end{itemize}

\medskip
\begin{center}
\includegraphics[width=\textwidth]{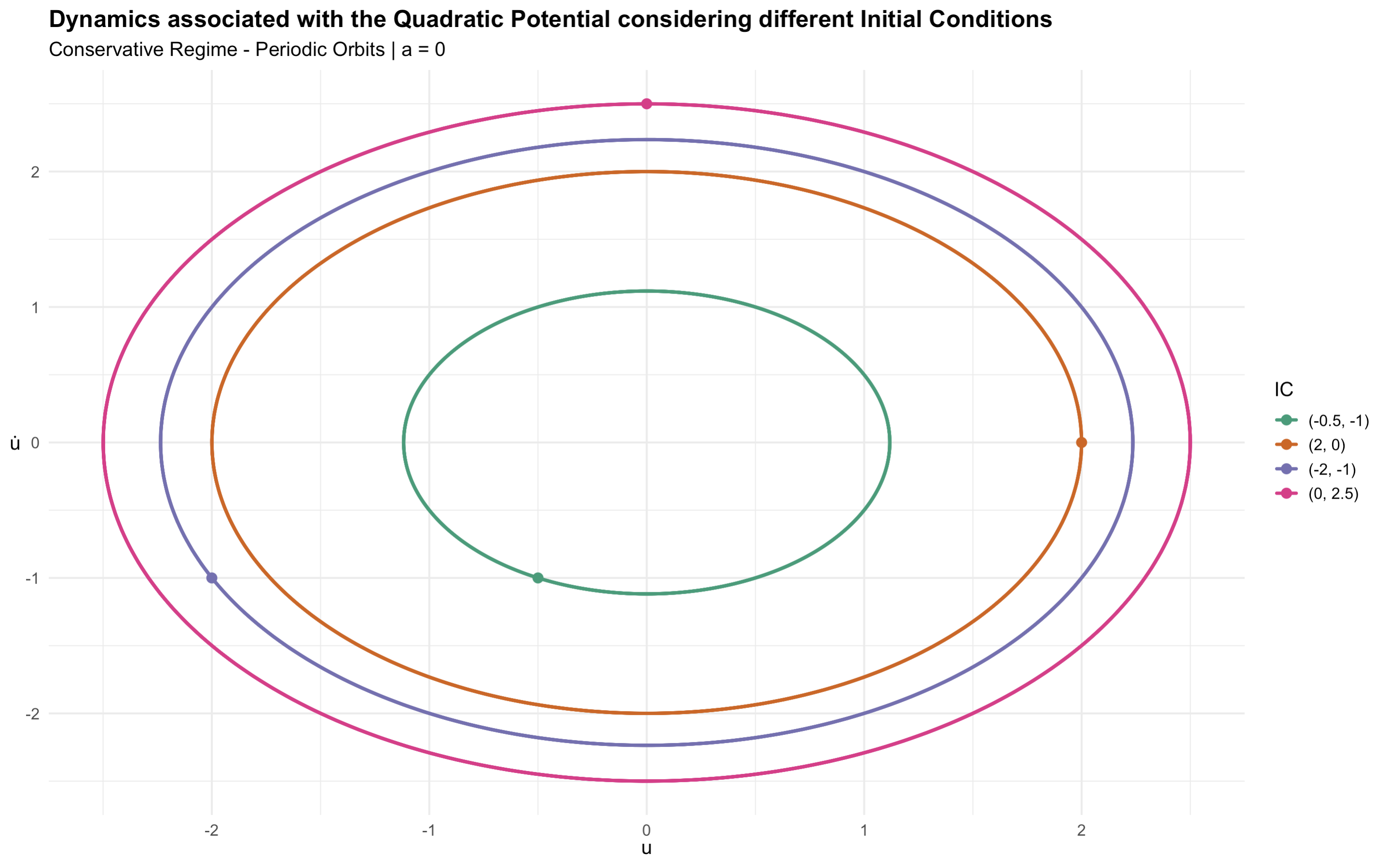}
\end{center}
\medskip

The second graph displays the dynamics for $a=0$ (conservative case).  
As we can see, the phase curves in this case are periodic orbits in the form of 
concentric ellipses centered at the origin. In this case $a=0$, the equation \eqref{QP} reduces to the undamped harmonic oscillator
$$
\ddot{u}(t)+u(t)=0,
$$
whose general solutions
$$
u(t)=c_1\sin(t)+c_2\cos(t)
$$
are periodic and conserve mechanical energy, as studied in Section \ref{Sec3}.


\subsection{Double-Well Potential}

The double-well potential provides a classical nonlinear example in which the long-time dynamics depend strongly on the initial data. We consider
$$
W(u)=\frac14(u^{2}-1)^{2}, \quad u\in\mathbb{R}.
$$
This potential has exactly three critical points, $u\in\{-1,0,1\}$, with two symmetric global minima at $u=\pm 1$ and a local maximum at $u=0$. The corresponding damped equation,
$$
\ddot{u}(t)+a\dot{u}(t)+u(u^{2}-1)=0,
$$
has as the set of equilibrium points given by 
$$
\mathcal{E}^{\ast}=\{(-1,0),(0,0),(1,0)\}.
$$
By the structure of $W$, the equilibria $u=\pm 1$ are asymptotically stable and the equilibrium $u=0$ is unstable. Thus the dynamics are bistable, that is, every solution converges to either $(1,0)$ or $(-1,0)$, depending on the initial condition, and the unstable equilibrium $(0,0)$ serves as a separatrix between the two basins of attraction. This behavior contrasts sharply with the quadratic potential considered previously. In the linear case, the system has a single globally asymptotically stable equilibrium, and all trajectories decay monotonically or oscillate before converging to zero, depending solely on the value of the damping parameter. Here, nonlinearity introduces multiple equilibria and sensitivity to initial conditions, producing a qualitatively different global structure.
	
In the first plot we present the phase portrait for the damped system with a fixed damping coefficient $a = 0.3$. The numerical simulations illustrate the bistable nature of the dynamics, where the asymptotic behavior is determined by the basin of attraction in which the initial data lies. 
Specifically, the trajectories starting at $(-1.5, 0.5)$ and the small perturbation $(-0.01, 0)$ are attracted to the stable equilibrium $(-1, 0)$. Conversely, the trajectories starting at $(1.5, -0.2)$ and $(0.01, 0)$ converge to the symmetric equilibrium $(1, 0)$. 
It is worth noting the sensitivity of the dynamics near the origin. The initial conditions $(\pm 0.01, 0)$ are chosen arbitrarily close to the unstable equilibrium $(0, 0)$. As observed, a minute variation in the sign of the initial position is sufficient to drive the solution into different basins of attraction, highlighting the role of the stable manifold of the origin as a separatrix between the two asymptotic regimes.
	
Meanwhile, the second figure displays the dynamics in the conservative limit ($a = 0$). 
In this regime, the system is Hamiltonian and preserves the total energy 
\[
E(u, \dot{u}) = \frac{1}{2}\dot{u}^2 + W(u). 
\]
Consequently, the asymptotic convergence observed in the damped case is replaced by 
the presence of periodic orbits. 

\medskip

\begin{center}
	\includegraphics[width=\textwidth]{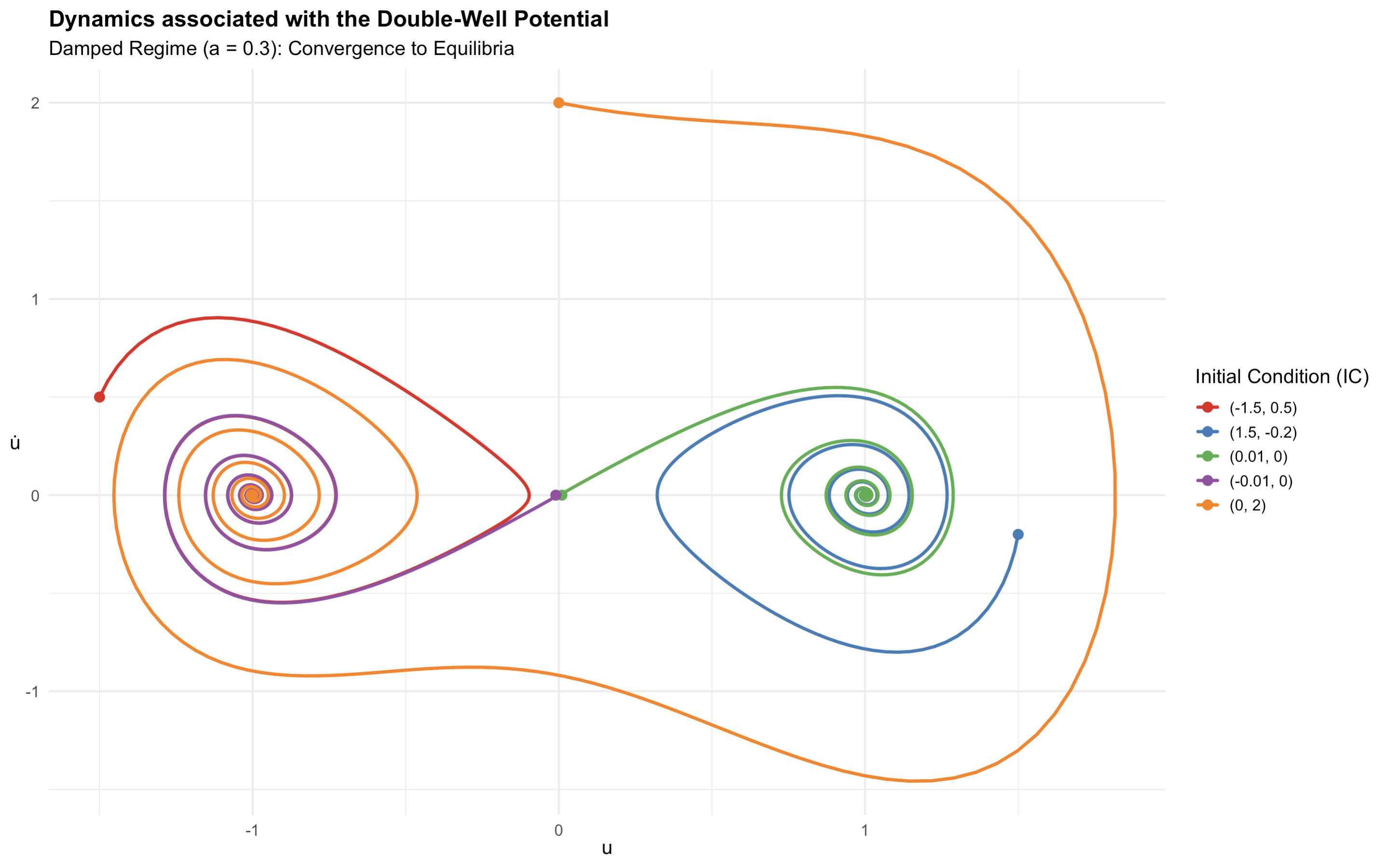}
\end{center}

\vspace{0.5cm}

\begin{center}
	\includegraphics[width=\textwidth]{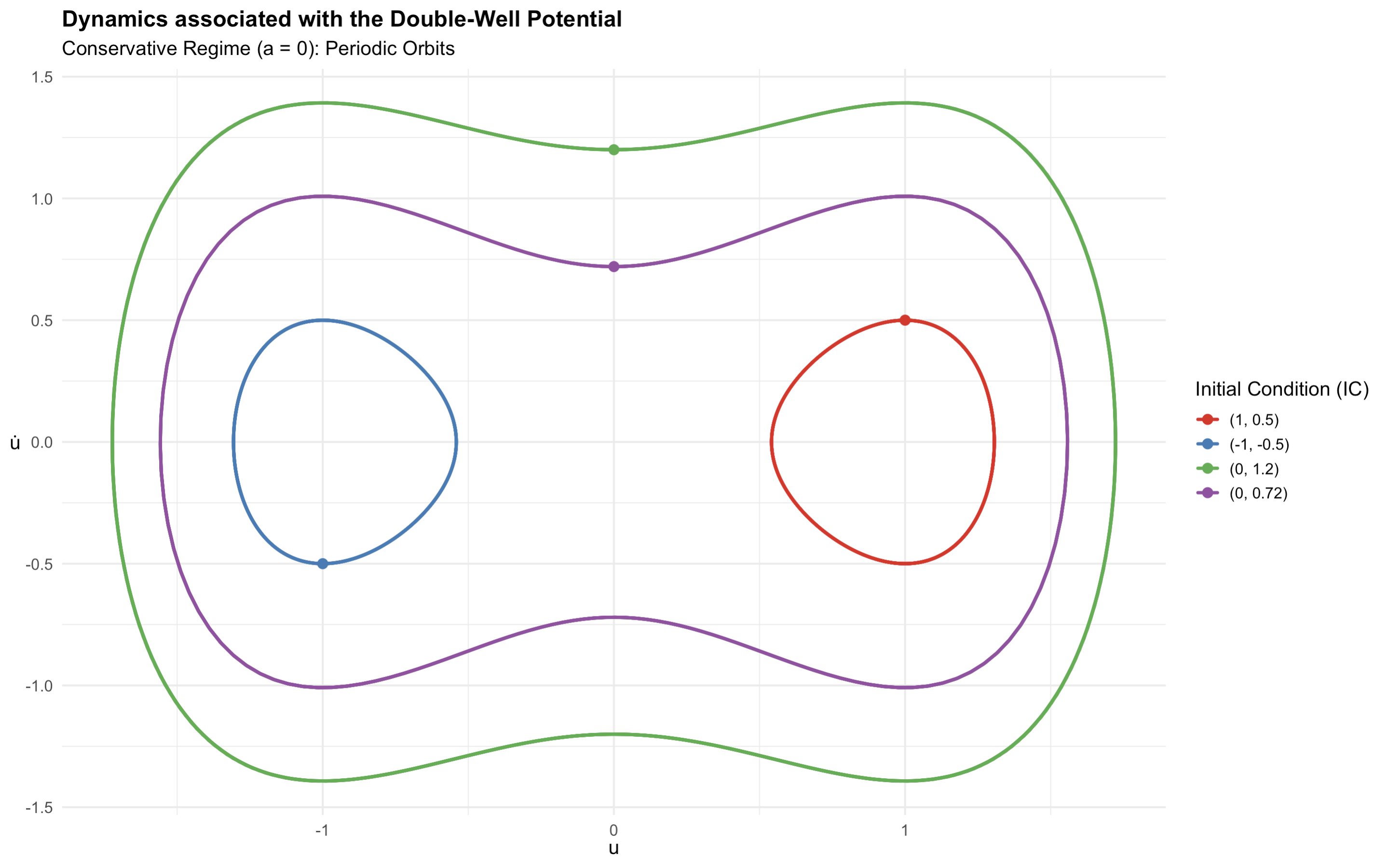}
\end{center}

\medskip

\begin{remark} \rm 
A similar analysis can be carried out for higher-order double-well potentials, such as
$$
W(u)=u^{2}(u^{2}-1)^{2},
$$
which preserve the bistable structure while introducing stronger nonlinearities. The qualitative features remain analogous.
\end{remark}


\subsection{Exponential Potential}

A third example illustrating a qualitatively different nonlinear behavior is provided by the exponential confinement potential
$$
W(u)=\frac12\big(e^{u^{2}}-1\big), \quad u\in\mathbb{R}.
$$
The function $W$ satisfies the assumptions \hyperlink{W1}{$(W_1)$}-\hyperlink{W4}{$(W_4)$}. It possesses a single critical point, namely $u=0$, which is the unique global minimum. Unlike polynomial potentials, $W(u)$ exhibits superquadratic growth, $W(u)\sim e^{u^{2}}/2$ as $|u|\to\infty$, resulting in a rapidly increasing restoring force. The associated damped equation is
$$
\ddot{u}(t)+a\dot{u}(t)+u e^{u^{2}}=0.
$$
The equilibrium set of the corresponding first-order formulation reduces to $\mathcal{E}^{\ast}=\{(0,0)\}$,
which is asymptotically stable. 
 
\medskip

\begin{center}
\includegraphics[width=\textwidth]{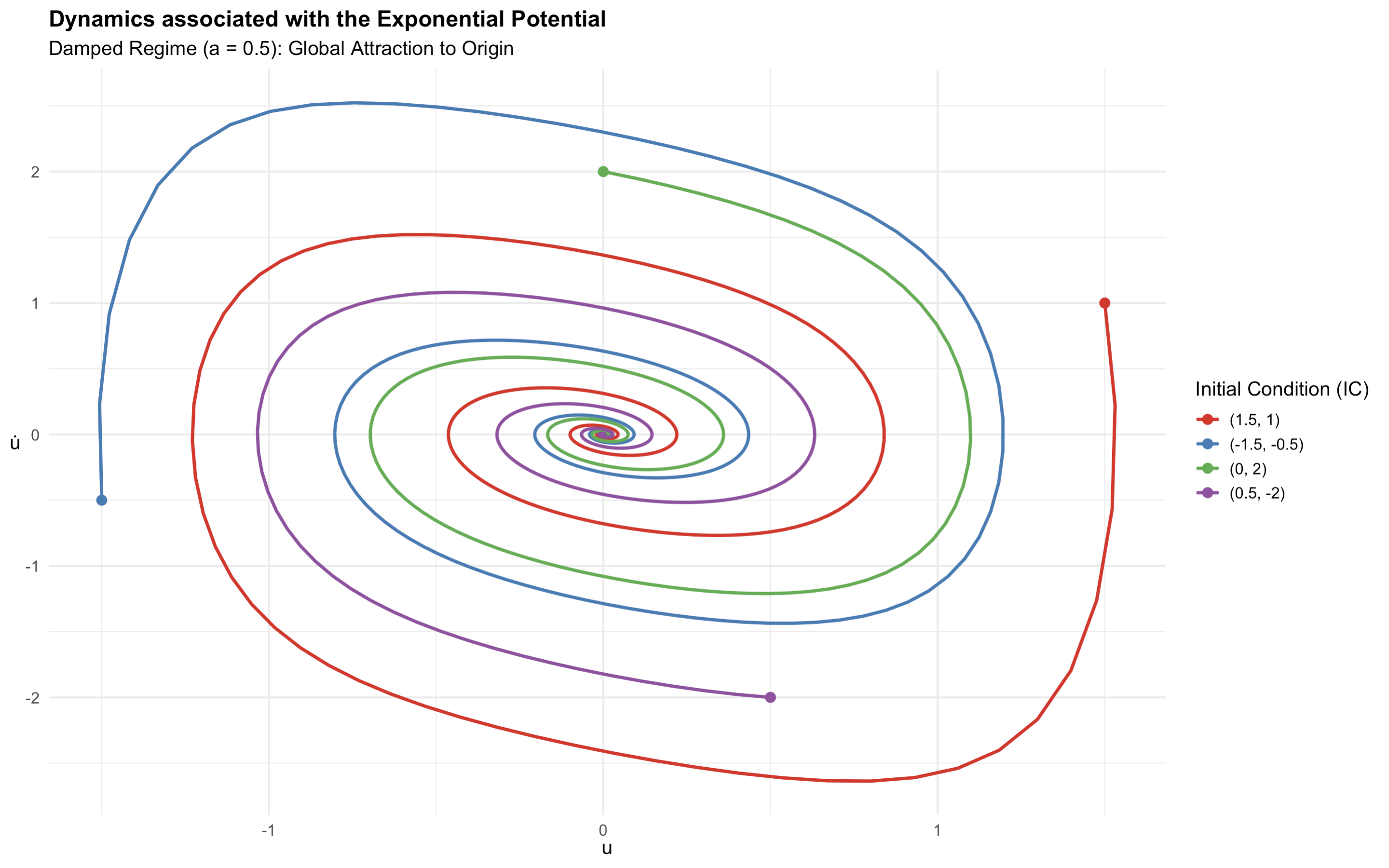}
\end{center}

\vspace{0.5cm}

\begin{center}
\includegraphics[width=\textwidth]{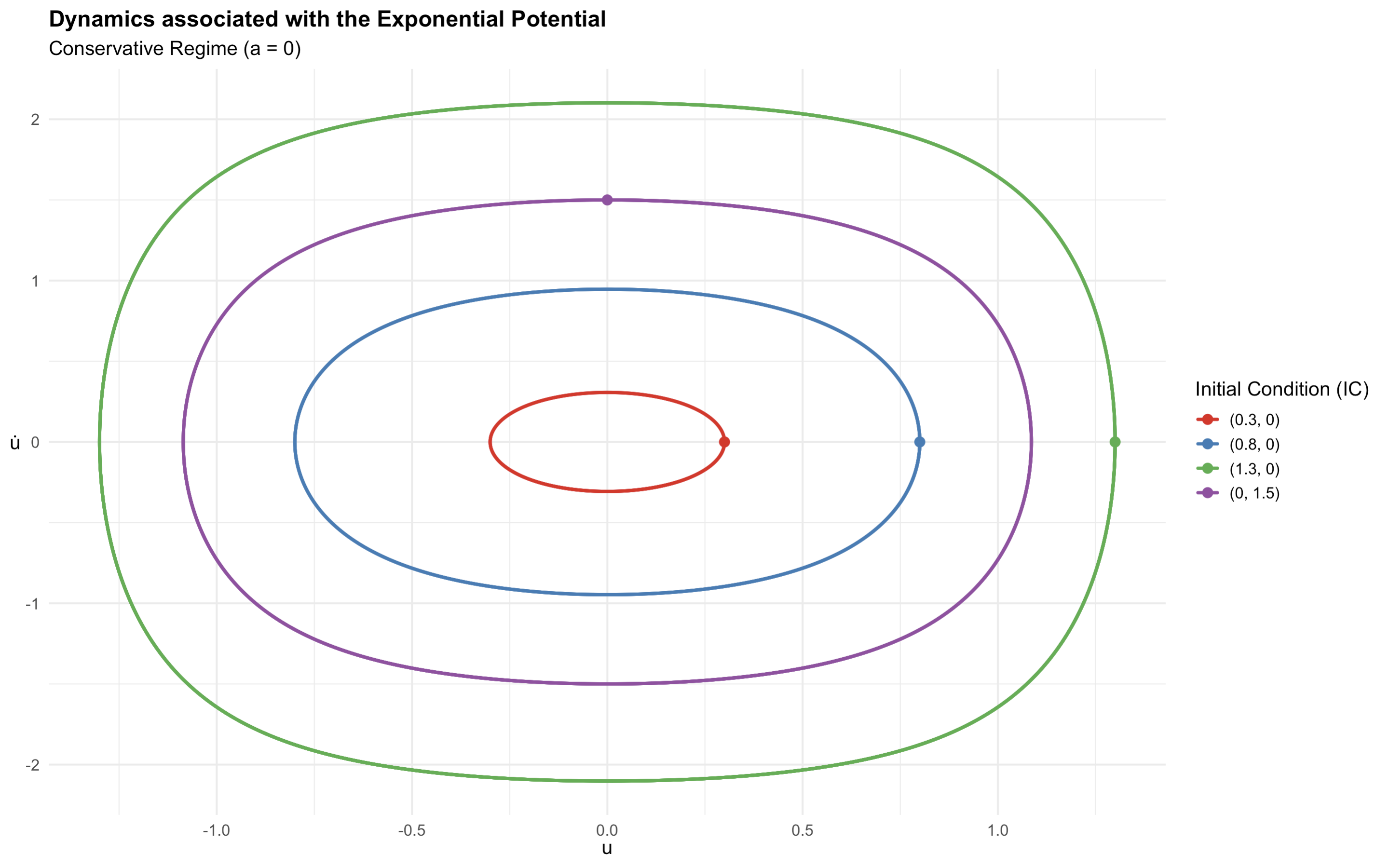}
\end{center}

\medskip

In contrast with the double-well, the exponential potential is strictly convex and possesses a unique critical point at the origin $(0,0)$. 
The numerical simulations demonstrate that the origin acts as a global attractor. Regardless of the initial conditions selected, all trajectories eventually decay to the zero state. 
The choice of $a=0.5$ places the system in an underdamped regime, resulting in spiral orbits as the solutions oscillate with decreasing amplitude around the equilibrium before reaching its resting state. 

On the other hand, the phase portrait in the conservative limit ($a = 0$) shows a confinement effect. 
The phase curves form a family of concentric, closed, periodic orbits surrounding the center at $(0,0)$ 
confirming that all solutions in this regime are bounded and periodic. 

\medskip

\begin{center}
\includegraphics[width=\textwidth]{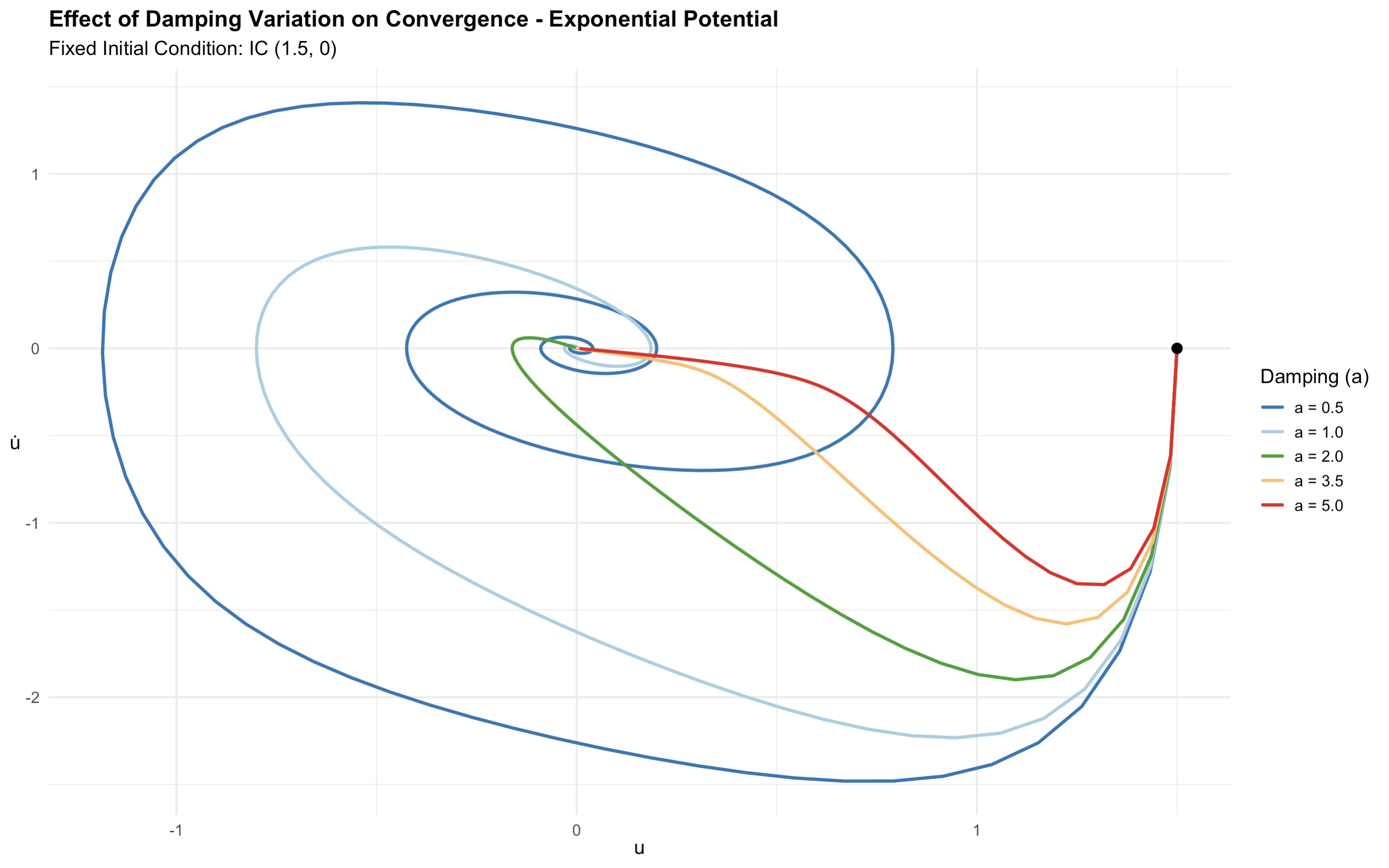}
\end{center}

\medskip

At last, in order to investigate the influence of the dissipation mechanism on the rate of convergence, 
we performed a sensitivity analysis on the damping parameter $a$, 
namely we present the phase portraits for five distinct values of $a \in \{0.5, 1.0, 2.0, 3.5, 5.0\}$, 
keeping the initial condition fixed at $(1.5, 0)$. 

We recall that for the linearized equation \eqref{QP} near the equilibrium, 
the value $a = 2$ represents the critical damping threshold. 
The numerical results for the nonlinear system preserve the qualitative features of the linear analysis, 
allowing us to distinguish three main regimes: 
\begin{enumerate}
\item Underdamped regime ($a < 2$): For the values $a=0.5$ and $a=1.0$, the trajectories exhibit oscillatory decay. As expected, the case $a=0.5$ (dark blue) displays stronger oscillations compared to 
$a=1.0$ (light blue), where the dissipation is sufficient to dampen the motion more rapidly. 
	
\item Critically damped regime ($a = 2$): The trajectory for $a=2.0$ (green) returns to the equilibrium as quickly as possible without oscillating. 
	
\item Overdamped regime ($a > 2$): For $a=3.5$ and $a=5.0$, the strong dissipation prevents the velocity from building up, 
illustrating the effect caused by excessive damping.
\end{enumerate}


\subsection{Implementation}

In this last subsection we present the documentation and scripts containing the code that generated the results in our examples. 

\vspace{0.5cm}

\noindent {\bf Script of the first example: damped harmonic oscillator.}

\begin{rcode}
library(deSolve)
library(ggplot2)
library(dplyr)

# --- 1. System Definition: Quadratic Potential ---
quadratic_potential <- function(t, state, parameters) {
	u <- state[1]
	v <- state[2]
	a <- parameters["a"]
	
	du <- v
	dv <- -a * v - u  # Linear restoring force
	
	list(c(du, dv))
}

# --- 2. PIPELINE A: Varying Damping ---
plot_damping_a <- function(fixed_ic, damping_values, time_seq) {
	
	sim_data <- lapply(damping_values, function(val_a) {
		out <- ode(y = fixed_ic, times = time_seq, func = quadratic_potential, parms = c(a = val_a))
		label_str <- paste("a =", sprintf("
		as.data.frame(out) 
	}) 
	bind_rows()
	
	sim_data$traj <- factor(sim_data$traj, levels = unique(sim_data$traj))
	
	# PALETTE.
	# Blue(a=0) -> Cyan -> Green -> Yellow(a=2) -> Orange -> Red(a=6)
	bright_colors <- c("#0072B2", "#56B4E9", "#009E73", "#F0E442", "#E69F00", "#D55E00")
	col_map <- bright_colors[1:length(unique(sim_data$traj))]
	
	p <- ggplot(sim_data, aes(x = u, y = v, color = traj, group = traj)) +
	geom_path(linewidth = 1.1, lineend = "round") + 
	geom_point(data = filter(sim_data, time == 0), size = 3, color = "black") + 
	scale_color_manual(values = col_map) +
	labs(
	title = "Dynamics associated with the Quadratic Potential - Damping Variation",
	subtitle = paste0("Fixed Initial Condition: (", fixed_ic['u'], ", ", fixed_ic['v'], ")"),
	x = expression(u), y = expression(dot(u)), color = "Damping (a)"
	) +
	theme_minimal(base_size = 14) +
	theme(
	plot.title = element_text(face = "bold"),
	legend.position = "right",
	axis.title.y = element_text(angle = 0, vjust = 0.5)
	)
	return(p)
}

# --- 3. PIPELINE B: Varying Initial Conditions ---
plot_phase_portrait <- function(damping_a, ic_list, time_seq, subtitle_text) {
	
	sim_data <- lapply(ic_list, function(ic) {
		out <- ode(y = ic, times = time_seq, func = quadratic_potential, parms = c(a = damping_a))
		label_str <- sprintf("(
		as.data.frame(out) 
	}) 
	bind_rows()
	
	sim_data$traj <- factor(sim_data$traj, levels = unique(sim_data$traj))
	
	# PALETTE.
	dark2_colors <- c("#1B9E77", "#D95F02", "#7570B3", "#E7298A", "#66A61E", "#E6AB02")
	col_map <- dark2_colors[1:length(unique(sim_data$traj))]
	
	p <- ggplot(sim_data, aes(x = u, y = v, color = traj, group = traj)) +
	geom_path(linewidth = 1.1, lineend = "round") + 
	geom_point(data = filter(sim_data, time == 0), size = 3) + 
	scale_color_manual(values = col_map) +
	labs(
	title = "Dynamics associated with the Quadratic Potential considering different Initial Conditions",
	subtitle = paste(subtitle_text, "| a =", damping_a),
	x = expression(u), y = expression(dot(u)), color = "IC"
	) +
	theme_minimal(base_size = 14) +
	theme(
	plot.title = element_text(face = "bold"),
	legend.position = "right",
	axis.title.y = element_text(angle = 0, vjust = 0.5)
	)
	return(p)
}

# --- 4. EXECUTION ---

# Plot varying the damping coefficient.

times_sens <- seq(0, 15, by = 0.05)
ic_fixed   <- c(u = 2, v = 0)

# 6 values covering all regimes:
# Conservative(0), Under(0.5, 1.0), Critical(2.0), Over(3.5, 6.0)
a_values   <- c(0, 0.5, 1.0, 2.0, 3.5, 6.0)

plot1 <- plot_damping_a(ic_fixed, a_values, times_sens)

print(plot1)

# Plot periodic orbits in the conservative case (a = 0).

times_pp <- seq(0, 25, by = 0.05)
ics_list <- list(
c(u = -0.5, v = -1),
c(u = 2, v = 0),
c(u = -2, v = -1),
c(u = 0, v = 2.5)
)

plot2 <- plot_phase_portrait(0, ics_list, times_pp, "Conservative Regime - Periodic Orbits")

print(plot2)
\end{rcode}


\vspace{0.5cm}

\noindent {\bf Script of the second example: double-well potential.}

\begin{rcode}
library(deSolve)
library(ggplot2)
library(dplyr)

# --- 1. Define the system with double-well potential ---
double_well <- function(t, state, parameters) {
	u <- state[1]
	v <- state[2]
	a <- parameters["a"] 
	
	du <- v
	dv <- -a * v - u * (u^2 - 1)
	
	list(c(du, dv))
}

# 2. Function that will run the simulation for the damped ODE with the double-well potential
# for a given value of the damping coefficient and a list of initial conditions
# and it will return the plot containing the phase portraits.

run_simulation <- function(damping_a, initial_conditions_list, time_seq, plot_subtitle) {
	
	# Run ODE.
	simulation_data <- lapply(initial_conditions_list, function(ic) {
		out <- ode(y = ic, times = time_seq, func = double_well, parms = c(a = damping_a))
		label_str <- sprintf("(
		as.data.frame(out) 
	}) 
	bind_rows()
	
	# Reorder Factors.
	unique_labels <- unique(simulation_data$traj)
	simulation_data$traj <- factor(simulation_data$traj, levels = unique_labels)
	
	# Define the vector with colors.
	my_colors <- c("#E41A1C", "#377EB8", "#4DAF4A", "#984EA3", "#FF7F00", "#A65628", "#F781BF")
	color_mapping <- my_colors[1:length(unique_labels)]
	
	# Plotting.
	p <- ggplot(simulation_data, aes(x = u, y = v, color = traj, group = traj)) +
	geom_path(linewidth = 1.1, lineend = "round") + 
	geom_point(data = filter(simulation_data, time == 0), size = 3) + 
	scale_color_manual(values = color_mapping) +
	labs(
	title = "Dynamics associated with the Double-Well Potential",
	subtitle = plot_subtitle,
	x = expression(u),       
	y = expression(dot(u)), 
	color = "Initial Condition (IC)"
	) +
	theme_minimal(base_size = 14) +
	theme(
	plot.title = element_text(face = "bold", size = 16),
	legend.position = "right",
	legend.text = element_text(size = 11),
	axis.title.y = element_text(angle = 0, vjust = 0.5) 
	)
	
	return(p)
}

# --- 3. Scenarios: damped and conservative ---

# Damped Case.
times_damped <- seq(0, 40, by = 0.05)

inits_damped_list <- list(
c(u = -1.5, v = 0.5),
c(u = 1.5,  v = -0.2),
c(u = 0.01, v = 0),   
c(u = -0.01, v = 0),  
c(u = 0,    v = 2.0)  
)

plot_damped <- run_simulation(0.3, inits_damped_list, times_damped, "Damped Regime (a = 0.3): Convergence to Equilibria")

print(plot_damped)

# Conservative Case.
times_conserv <- seq(0, 30, by = 0.05)
inits_conserv_list <- list(
c(u = 1, v = 0.5),    
c(u = -1, v = -0.5),  
c(u = 0, v = 1.2),    
c(u = 0, v = 0.72)    
)

plot_conserv <- run_simulation(0, inits_conserv_list, times_conserv, "Conservative Regime (a = 0): Periodic Orbits")

print(plot_conserv)
\end{rcode}


\vspace{0.5cm}

\noindent {\bf Script of the third example: exponential potential.}

\begin{rcode}
library(deSolve)
library(ggplot2)
library(dplyr)

# --- 1. System Definition: Exponential Potential ---
# Potential: W(u) = 0.5 * (exp(u^2) - 1)
# Gradient:  W'(u) = u * exp(u^2)

exp_potential <- function(t, state, parameters) {
	u <- state[1]
	v <- state[2]
	a <- parameters["a"] 
	
	du <- v
	# The restoring force is now -u * exp(u^2)
	dv <- -a * v - u * exp(u^2)
	
	list(c(du, dv))
}

# --- 2. Function for running simulations and plotting the phase portraits. ---
run_simulation <- function(damping_a, initial_conditions_list, time_seq, plot_subtitle) {
	
	simulation_data <- lapply(initial_conditions_list, function(ic) {
		out <- ode(y = ic, times = time_seq, func = exp_potential, parms = c(a = damping_a))
		# Label formatting
		label_str <- sprintf("(
		as.data.frame(out) 
	}) 
	bind_rows()
	
	# Factor reordering for Legend
	unique_labels <- unique(simulation_data$traj)
	simulation_data$traj <- factor(simulation_data$traj, levels = unique_labels)
	
	# Palette
	my_colors <- c("#E41A1C", "#377EB8", "#4DAF4A", "#984EA3", "#FF7F00", "#A65628")
	color_mapping <- my_colors[1:length(unique_labels)]
	
	# Plotting
	p <- ggplot(simulation_data, aes(x = u, y = v, color = traj, group = traj)) +
	geom_path(linewidth = 1.1, lineend = "round") + 
	geom_point(data = filter(simulation_data, time == 0), size = 3) + 
	scale_color_manual(values = color_mapping) +
	labs(
	title = "Dynamics associated with the Exponential Potential",
	subtitle = plot_subtitle,
	x = expression(u),       
	y = expression(dot(u)), 
	color = "Initial Condition (IC)" 
	) +
	theme_minimal(base_size = 14) +
	theme(
	plot.title = element_text(face = "bold", size = 16),
	legend.position = "right",
	axis.title.y = element_text(angle = 0, vjust = 0.5) 
	)
	
	return(p)
}

# --- 3. Scenario A: Damped Case (a = 0.5) ---

times_damped <- seq(0, 25, by = 0.05)

inits_damped_list <- list(
c(u = 1.5, v = 1.0),
c(u = -1.5, v = -0.5),
c(u = 0, v = 2.0),
c(u = 0.5, v = -2.0) 
)

plot_damped <- run_simulation(
damping_a = 0.5, 
initial_conditions_list = inits_damped_list, 
time_seq = times_damped, 
plot_subtitle = "Damped Regime (a = 0.5): Global Attraction to Origin"
)

print(plot_damped)

# --- 4. Scenario B: Conservative Case (a = 0) ---

times_conserv <- seq(0, 15, by = 0.01)

inits_conserv_list <- list(
c(u = 0.3, v = 0),
c(u = 0.8, v = 0),
c(u = 1.3, v = 0),
c(u = 0,   v = 1.5)
)

plot_conserv <- run_simulation(
damping_a = 0, 
initial_conditions_list = inits_conserv_list, 
time_seq = times_conserv, 
plot_subtitle = "Conservative Regime (a = 0)"
)

print(plot_conserv)
\end{rcode}


\medskip

\noindent Script: varying the damping coefficient within the exponential potential. 

\medskip

\begin{rcode}
library(deSolve)
library(ggplot2)
library(dplyr)

# --- 1. System Definition ---
exp_potential <- function(t, state, parameters) {
	u <- state[1]
	v <- state[2]
	a <- parameters["a"] 
	
	du <- v
	dv <- -a * v - u * exp(u^2)
	
	list(c(du, dv))
}

# --- 2. Function for running simulations with different values of the damping. ---
# It returns the phase portraits, where each curve corresponds to a value of the damping.

run_damping_comparison <- function(fixed_ic, damping_values, time_seq) {
	
	simulation_data <- lapply(damping_values, function(val_a) {
		out <- ode(y = fixed_ic, times = time_seq, func = exp_potential, parms = c(a = val_a))
		
		# Label formatting
		label_str <- paste("a =", sprintf("
		as.data.frame(out) 
	}) 
	bind_rows()
	
	# Ensure the legend order matches the numeric order of 'a'
	unique_labels <- unique(simulation_data$traj)
	simulation_data$traj <- factor(simulation_data$traj, levels = unique_labels)
	
	# Custom Color Palette.
	# Blue/Cyan for Underdamped -> Green for Critical -> Orange/Red for Overdamped.
	my_colors <- c("#1f78b4", # Strong Blue (a=0.5)
	"#a6cee3", # Light Blue (a=1.0)
	"#33a02c", # Green      (a=2.0 - Critical)
	"#fdbf6f", # Orange     (a=3.5)
	"#e31a1c") # Red        (a=5.0)
	
	color_mapping <- my_colors[1:length(unique_labels)]
	
	p <- ggplot(simulation_data, aes(x = u, y = v, color = traj, group = traj)) +
	geom_path(linewidth = 1.1, lineend = "round") + 
	geom_point(data = filter(simulation_data, time == 0), size = 3, color = "black") + 
	scale_color_manual(values = color_mapping) +
	labs(
	title = "Effect of Damping Variation on Convergence - Exponential Potential",
	subtitle = paste0("Fixed Initial Condition: IC (", fixed_ic['u'], ", ", fixed_ic['v'], ")"),
	x = expression(u),       
	y = expression(dot(u)), 
	color = "Damping (a)" 
	) +
	theme_minimal(base_size = 14) +
	theme(
	plot.title = element_text(face = "bold", size = 16),
	legend.position = "right",
	axis.title.y = element_text(angle = 0, vjust = 0.5) 
	)
	
	return(p)
}

# --- 3. Execution ---

times <- seq(0, 20, by = 0.05)
fixed_ic <- c(u = 1.5, v = 0)

# Vector with five positive values for the damping.
damping_values <- c(0.5, 1.0, 2.0, 3.5, 5.0)

plot_damping <- run_damping_comparison(fixed_ic, damping_values, times)

print(plot_damping)
\end{rcode}


\end{document}